\documentclass[11pt,a4paper]{article}

\usepackage[T1]{fontenc}
\usepackage{amsmath}
\usepackage{amsthm}
\usepackage{amsfonts}
\usepackage{subcaption}
\usepackage{float}
\usepackage{algorithm}
\usepackage{algpseudocode}
\usepackage{amssymb}
\usepackage{graphicx}
\usepackage{tcolorbox}
\usepackage{mathtools}
\usepackage{booktabs}
\usepackage{pgfplotstable,array}

\pgfplotsset{compat=1.18}
\pgfplotstableset{
    zerofill,
    precision=3,
    col sep=comma,
    header=true,
    fixed,
    font={\scriptsize},
    row style/.style 2 args={
        every row #1 column 0/.style={#2},
        every row #1 column 1/.style={#2},
        every row #1 column 2/.style={#2},
        every row #1 column 3/.style={#2},
        every row #1 column 4/.style={#2},
        every row #1 column 5/.style={#2},
        every row #1 column 6/.style={#2},
        every row #1 column 7/.style={#2},
        every row #1 column 8/.style={#2},
        every row #1 column 9/.style={#2},
        every row #1 column 10/.style={#2},
    },
}
\usepackage{siunitx}

\usepackage[table,xcdraw]{xcolor}
\usepackage{multirow}
\usepackage{rotating,tabularx}

\usepackage[left=2cm,right=2cm,top=2cm,bottom=2cm]{geometry}
\newcolumntype{L}[1]{>{\raggedright\let\newline\\\arraybackslash\hspace{0pt}}m{#1}}
\newcolumntype{C}[1]{>{\centering\let\newline\\\arraybackslash\hspace{0pt}}m{#1}}
\newcolumntype{R}[1]{>{\raggedleft\let\newline\\\arraybackslash\hspace{0pt}}m{#1}}

\newtheorem{Theorem}{Theorem}[section]
\newtheorem{Proposition}[Theorem]{Proposition}
\newtheorem{Remark}[Theorem]{Remark}
\newtheorem{Lemma}[Theorem]{Lemma}

\newtheorem{Definition}[Theorem]{Definition}
\newtheorem{Example}[Theorem]{Example}

\newtheorem{Step}{Step}
\usepackage{hyperref}
\hypersetup{colorlinks=true,urlcolor=blue}
\expandafter\let\expandafter\oldproof\csname\string\proof\endcsname
\let\oldendproof\endproof
\renewenvironment{proof}[1][\proofname]{
\oldproof[\ttfamily\scshape \bf #1.]
}{\oldendproof}

\def\tilde{\widetilde}

\def\min{\mbox{\rm minimize}}

\def\ox{\overline{x}}

\def\Tilde{\widetilde}
\def\Bar{\overline}

\def\epsilon{\varepsilon}
\def\ox{\bar{x}}

\def\ph{\varphi}

\def\lm{\lambda}

\def \N{{\rm I\!N}}
\def \R{{\rm I\!R}}

\newcommand{\dotproduct}[1]{\left\langle#1\right\rangle}

\newcommand{\norm}[1]{\left\|#1\right\|}

\numberwithin{equation}{section}

\title{\bf New Globalized Newton-Type Methods for Nonconvex Optimization Problems}
\author{Vo Thanh Phat\footnote{Department of Mathematics and Statistics,  University of North Dakota, Grand Forks, North Dakota, USA. E-mail: thanh.vo.1@und.edu.}\quad\quad\quad  Tuyen Tran\footnote{Department of Mathematics  and Statistics,  Loyola University Chicago, Chicago, Illinois, USA. E-mail: ttran18@luc.edu.}}
\begin{document}  
\maketitle
\noindent
{\small{\bf Abstract}. Newton's method is one of the most effective second-order algorithms for smooth optimization because of its fast local convergence. However, existing globally convergent Newton-type methods typically require convexity or strong convexity of the objective function, while approaches for nonconvex optimization often rely on Hessian regularization at every iteration. In this paper, we propose a general line-search Newton framework for unconstrained optimization that avoids repeated Hessian regularization by exploiting the Newton direction only when it is well-defined and suitable. The proposed framework encompasses several existing hybrid gradient--Newton methods as special cases and naturally yields a new extragradient Newton method. We establish global convergence under mild assumptions, including the Polyak--\L ojasiewicz--Kurdyka (PLK) condition, allowing both isolated and nonisolated accumulation points. We further prove local superlinear and quadratic convergence under appropriate regularity assumptions. Finally, we apply the proposed framework to strongly quasiconvex optimization and provide, to the best of our knowledge, the first Newton-type algorithm together with a comprehensive convergence analysis for this important class of nonconvex optimization problems. Numerical experiments demonstrate the effectiveness of the proposed methods.    \\[1ex]
{\bf Key words}.  nonconvex optimization, Newton methods, linesearch methods, generalized convexity, quasiconvexity, pseudoconvexity, Polyak-\L ojasiewicz-Kurdyka conditions  \\[1ex]
{\bf Mathematics Subject Classification (2010).}  90C30, 90C53, 49M05, 49M15 }\vspace*{-0.1in}

\section{Introduction}
The classical Newton method was originally introduced for solving systems of nonlinear equations (see \cite{ds96,kel03}) and was later adapted to unconstrained optimization problems; see, for example, the monographs \cite{BeckAl,ds96,Solo14}. Consider the unconstrained optimization problem
\begin{equation}\label{opproblem}
\min  \quad \varphi(x) \quad \text{subject to } x\in\R^n,
\end{equation}
where the objective function $\varphi$ is twice continuously differentiable. The Newton iteration is derived from the second-order Taylor approximation of the objective function and generates a sequence $
x^{k+1}=x^k+d^k$, 
where the search direction $d^k$ is obtained by solving the linear system
\begin{equation}\label{classical}
-\nabla\varphi(x^k)=\nabla^2\varphi(x^k)d^k,\qquad k=0,1,2,\ldots.
\end{equation}
Here, $\nabla\varphi(x^k)$ and $\nabla^2\varphi(x^k)$ denote the gradient vector and Hessian matrix of $\varphi$ at $x^k$, respectively. Owing to its local quadratic convergence, the Newton method is one of the most influential second-order methods in nonlinear optimization.

\medskip 
Despite its remarkable local convergence properties, the classical Newton method is not globally convergent in general. Consequently, numerous globalization techniques have been developed to ensure convergence from arbitrary initial points. Among these, line-search strategies constitute one of the most successful approaches and have been extensively investigated under the assumption that the objective function is strongly convex; see, for example, \cite{BeckAl,Boyd,burke}. These globally convergent Newton frameworks have become the cornerstone for many subsequent developments, including  regularized Newton methods \cite{dmn24,gn17,lfqy04,mish23,poly09}, coderivative-based Newton methods for nonsmooth optimization \cite{kmptjogo,kmp,kmptmp,sc24}, and stochastic Newton methods \cite{bbn18,bcnn11,rm19}.

\medskip 
Nevertheless, most existing Newton-type methods require the objective function to be convex, or even strongly convex, to establish global convergence. Such assumptions are often restrictive in modern optimization problems arising from machine learning, economics, signal processing, and data science, where objective functions are frequently nonconvex. This limitation has motivated the development of   regularized Newton methods for nonconvex problems, which incorporate Hessian modifications or regularization terms to guarantee descent directions even in nonconvex settings; see, for example,  \cite{uy10,uy14}. However, these methods usually require modifying or regularizing the Hessian matrix at every iteration, resulting in additional computational costs and algorithmic complexity.

\medskip 
The objective of this paper is to further extend Newton's method to important classes of nonconvex optimization problems without requiring Hessian modifications at every iteration. In particular, we focus on optimization problems whose objective functions are quasiconvex or satisfy the Polyak--\L ojasiewicz--Kurdyka (PLK) condition. These function classes have attracted considerable attention in recent years because they encompass many practically relevant optimization models while allowing significantly weaker assumptions than strong convexity. The key feature of our approach is that the Newton direction is used only when it exists and is suitable, rather than being enforced at every iteration through Hessian regularization as in regularized Newton methods. This leads to a computationally attractive algorithm while maintaining strong convergence guarantees under suitable assumptions. The main contributions of this paper can be summarized as follows.
\begin{itemize}
\item We propose a new Newton framework for general nonconvex optimization problems equipped with a line-search globalization strategy. The proposed framework unifies several existing hybrid gradient--Newton methods as special cases and naturally gives rise to a novel extragradient Newton method.

\item We establish global convergence of the proposed method under mild assumptions. Our analysis covers both isolated and non-isolated accumulation points through the Polyak--\L ojasiewicz--Kurdyka (PLK) condition.

\item We further investigate local convergence properties and prove superlinear or quadratic convergence under appropriate regularity assumptions.

\item We apply the proposed framework to strongly quasiconvex optimization, an important class of optimization problems that has recently received considerable attention in economics and optimization theory. Existing algorithms for this class are almost exclusively first-order methods \cite{glm25,l22,l25}. To the best of our knowledge, this paper provides the first Newton-type algorithm together with a comprehensive convergence analysis for strongly quasiconvex optimization.
\end{itemize}

The paper is organized as follows. Section~\ref{sec:general} introduces a general Newton framework with line-search globalization and establishes preliminary convergence properties. Section~\ref{sec:PLK} investigates global convergence under the PLK condition. Section~\ref{sec:localconvergence} studies local superlinear and quadratic convergence. Section~\ref{sec:quasiconvex} is devoted to the analysis of the proposed method for quasiconvex optimization problems. Numerical experiments illustrating the theoretical results are presented in Section~\ref{sec:num}. Finally, Section~\ref{sec:conclusion} concludes the paper with a brief summary and directions for future research.

\section{General Scheme for   Newton Methods with Line Search}\label{sec:general}
We begin by recalling some basic notions and notation that will be used throughout this paper. All considerations are made in the space $\R^n$ equipped with the Euclidean norm $\|\cdot\|$ and the standard inner product $\left\langle \cdot, \cdot\right\rangle$. We denote by $\N:=\{0,1,2,\ldots\}$ the set of natural numbers, and by $\R_+$ the set of nonnegative real numbers. The symbol $B_r(x)$ signifies  the  open ball  centered at $x$ with radius $r > 0$.   For $J\subset \N$, the notation $x^k \overset{J}{\to} \bar{x}$ indicates that the subsequence $\{x^k\}_{k \in J}$ converges to $\bar{x}$ as $k \to \infty$. For a $\mathcal{C}^1$-smooth function $\varphi:\R^n\to\R$, a point $\bar{x}$ is called a \textit{stationary point} of $\varphi$ if $\nabla \varphi(\bar{x})=0$. For a symmetric matrix $A$, we write $\lambda_{\text{\rm min}}(A)$ to denote its smallest eigenvalue. Recall that a function $\varphi:\R^n \to \R$ is called $\mathcal{C}^{1,1}_L$ for some $L>0$ if $\varphi$ is Fr\'echet differentiable and $\nabla \varphi$ is Lipschitz continuous with modulus $L>0$, i.e.,
$$
\|\nabla\varphi(x) - \nabla\varphi(y) \| \leq L\|x-y\| \quad \text{for all }\;x,y \in \R^n.
$$
It follows from \cite[Lemma A.11]{Solo14}  that when $\varphi$ is $\mathcal{C}^{1,1}_L$ for some  $L>0$, the following holds:
\begin{equation}\label{descentlm}
\varphi(y)\leq \varphi(x) + \langle \nabla \varphi(x), y-x\rangle + \frac{L}{2}\|y-x\|^2 \quad \text{for all } x,y\in\R^n.
\end{equation}
The main idea of our proposed Newton-type method is to replace the classical Newton step \eqref{classical} 
with the modified step
$$
-\nabla\varphi(\hat{x}^k) = \nabla^2\varphi(\hat{x}^k) d^k, \quad k = 0,1,2,\ldots
$$
where $\hat{x}^k$ is selected to be sufficiently close to $x^k$ in an appropriate sense. The illustration comparing our Newton-type method with the classical Newton-type method is shown in Figure~\ref{fig:NM-compare}. As can be seen, instead of constructing the tangent line at $x^k$, we construct it at $\hat{x}^k$. If $\hat{x}^k$ is chosen appropriately, we expect the sequence generated by our method to converge to $x^*$ faster than the classical Newton sequence.  
\begin{figure}
    \centering        \includegraphics[width=\linewidth]{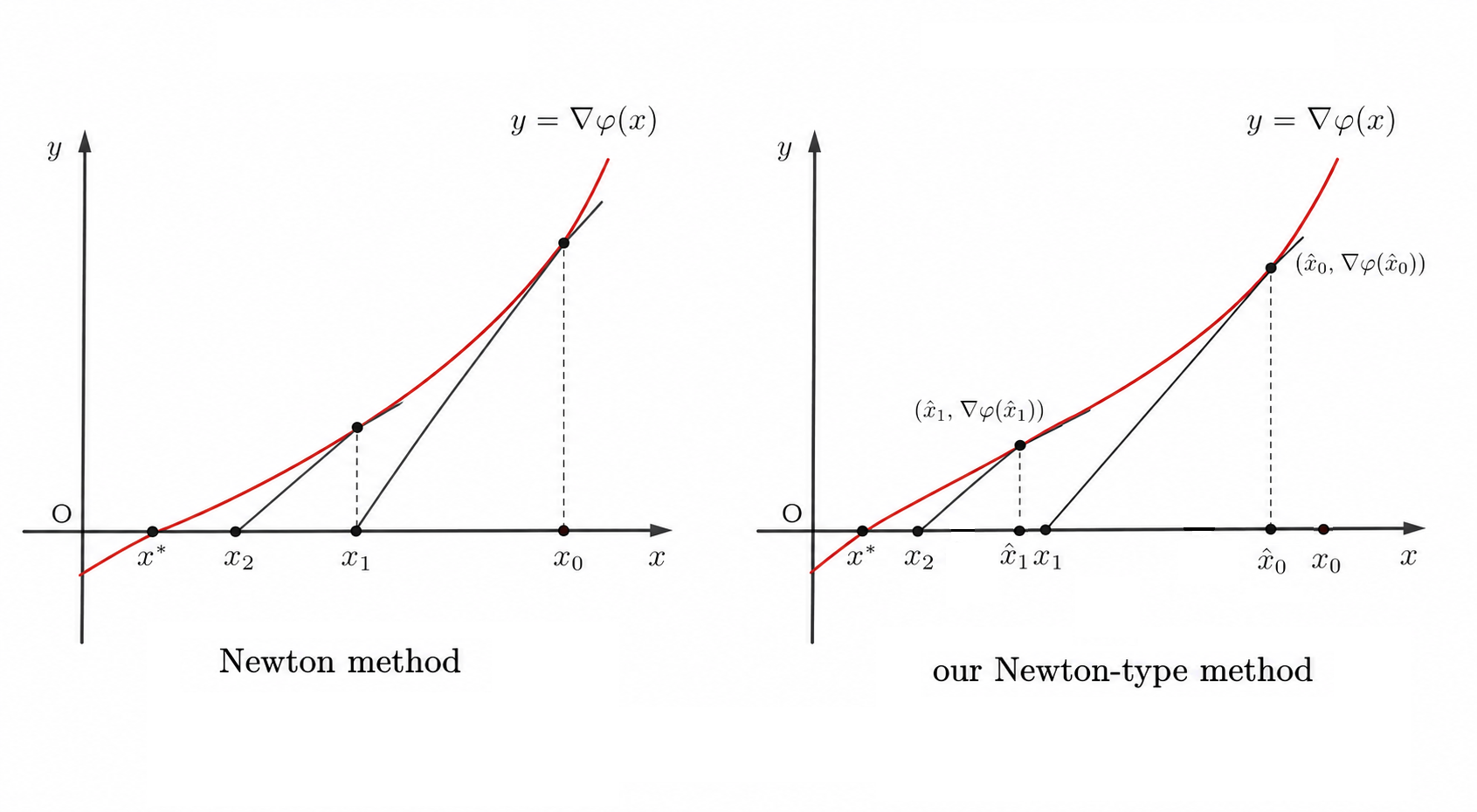}
    \caption{Geometric interpretation of Newton-type iterations}
    \label{fig:NM-compare}
\end{figure}
More precisely, we require that the function value at $\hat{x}^k$ does not exceed that at $x^k$, i.e., $\varphi(\hat{x}^k) \le \varphi(x^k)$. In addition, the distance between $\hat{x}^k$ and $x^k$ must be sufficiently small in an appropriate sense.   
With these requirements, our  globalization scheme for the Newton method is formulated as follows. 

\begin{algorithm}[H]
\caption{(general framework for globalized Newton methods)}\label{globalaNM}
\text{\bf Input:} {$x^0\in\R^n$,  $\sigma \in (0,1/2)$, $\beta\in(0,1)$, $\zeta>0$}.  For $k=1,2,\ldots,$ do the following:
\setcounter{Step}{0}
\begin{Step}\rm ({\bf approximate step}) 
Choose $\hat{x}^k$ satisfying $\varphi(\hat{x}^k)\leq \varphi(x^k)$. If $ \nabla\varphi(\hat{x}^k)=0$, we stop the algorithm. Otherwise, we  go to the next step.
\end{Step} 
\begin{Step}\rm Choose $d^k \in \R^n$ satisfying
$$
d^k = \begin{cases}
-\nabla^2 \varphi(\hat{x}^k)^{-1}\nabla\varphi(\hat{x}^k) & \text{if }\;  \lambda_{\rm min}\left(\nabla^2 \varphi(\hat{x}^k)\right)\geq \zeta \\
-\nabla\varphi(\hat{x}^k) & \text{otherwise}
\end{cases}
$$
\end{Step}
\begin{Step}\rm Set $\tau_k:= \text{\rm argmax} \left\{\tau|\; \varphi(\hat{x}^k +\tau d^k) \leq  \varphi(\hat{x}^k) + \sigma \tau \dotproduct{\nabla\varphi(\hat{x}^k),d^k}   , \tau =\beta^{i}, i \in \N \right\}$.
\end{Step}
\begin{Step}\rm Set $x^{k+1}:= \hat{x}^k + \tau_k d^k.$
\end{Step}
\end{algorithm}

\begin{Remark}[\bf selections of the approximate step in Algorithm \ref{globalaNM}]\label{choicestep} \rm  The choice of $\hat{x}^k$ in Algorithm \ref{globalaNM} gives rise to several variants of Newton-type methods. A natural and simplest choice is to set $\hat{x}^k = x^k$, in which case Algorithm \ref{globalaNM} reduces to the {\em hybrid gradient-Newton} method; see e.g. \cite{BeckAl} and the references therein. 

\medskip 
Another choice, which will be investigated in this paper, is to define $\hat{x}^k$ as
\begin{equation}\label{extragrad}
\hat{x}^k:=x^k - \lm \nabla \varphi(x^k), \quad k =0, 1, 2,\ldots
\end{equation}
where $\lm \in (0,2/L)$ and $L$ is the Lipschitz constant of $\nabla \varphi$. In this case, we refer to the resulting algorithm as the {\em extragradient Newton method}, and we will evaluate its performance through numerical experiments in Section \ref{sec:num}. 
We will show that the choice in \eqref{extragrad} satisfies the requirements of Algorithm \ref{globalaNM} under the assumption that $\varphi$ is $\mathcal{C}^{1,1}_L$ for some  $L>0.$ Indeed, by \eqref{descentlm}, we have 
$$
\varphi(\hat{x}^k) \leq \varphi(x^k) +\dotproduct{\nabla \varphi(x^k), \hat{x}^k-x^k} + \frac{L}{2} \norm{\hat{x}^k-x^k}^2,
$$
or
$$
\varphi(\hat{x}^k) \leq \varphi(x^k) - \lm\left(1-\frac{L}{2}\lm   \right)\norm{\nabla\varphi(x^k)}^2 \leq \varphi(x^k). 
$$
\end{Remark}

We next study the convergence properties of the general framework in Algorithm \ref{globalaNM} under a general choice of approximate steps. To this end, we first establish several auxiliary results.

\begin{Proposition}\label{uniformPD} Let $\varphi\colon \R^n \to \R$ be a $\mathcal{C}^2$-smooth function, and let $\bar{x} \in \R^n$ be such that $\nabla^2\varphi(\bar{x})$ is positive definite. Then the minimum eigenvalue of the Hessian satisfies 
\begin{equation}\label{symmetricmin}
\alpha:=\lambda_{\rm min}(\nabla^2\varphi(\bar{x}))>0 \quad \text{and }\; \dotproduct{\nabla^2 \varphi(\ox)w,w} \geq \alpha \|w\|^2 \quad \text{for all }\; w \in \R^n.
\end{equation}
Furthermore, for every $\kappa \in (0,\alpha)$, there exists $r>0$ such that 
\begin{equation}\label{neighbornorm}
\dotproduct{\nabla^2\varphi(x)w,w}  \ge \kappa \|w\|^2 \quad \text{for all } w \in \R^n, x \in B_r(\ox). 
\end{equation}
\end{Proposition}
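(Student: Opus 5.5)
The first claim follows from the spectral theorem. Since $\varphi$ is $\mathcal{C}^2$, the Hessian $\nabla^2\varphi(\bar{x})$ is symmetric, so it admits an orthonormal eigenbasis $\{v_1,\ldots,v_n\}$ with real eigenvalues $\lambda_1,\ldots,\lambda_n$. Positive definiteness gives $\lambda_i=\dotproduct{\nabla^2\varphi(\ox)v_i,v_i}>0$ for every $i$, hence $\alpha=\min_i\lambda_i>0$. Expanding $w=\sum_i c_iv_i$ yields
$$
\dotproduct{\nabla^2\varphi(\ox)w,w}=\sum_i\lambda_i c_i^2\ge\alpha\sum_i c_i^2=\alpha\|w\|^2,
$$
which is \eqref{symmetricmin}.

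For \eqref{neighbornorm}, I would use a perturbation argument driven by continuity of the Hessian. Fix $\kappa\in(0,\alpha)$. Since $\nabla^2\varphi$ is continuous at $\bar{x}$ in the operator norm (all norms on matrices are equivalent), there is $r>0$ such that
$$
\|\nabla^2\varphi(x)-\nabla^2\varphi(\ox)\|\le\alpha-\kappa \quad \text{for all } x\in B_r(\ox).
$$
Then for any $w\in\R^n$ and $x\in B_r(\ox)$, Cauchy--Schwarz together with \eqref{symmetricmin} gives
$$
\dotproduct{\nabla^2\varphi(x)w,w}\ge\dotproduct{\nabla^2\varphi(\ox)w,w}-\|\nabla^2\varphi(x)-\nabla^2\varphi(\ox)\|\,\|w\|^2\ge\alpha\|w\|^2-(\alpha-\kappa)\|w\|^2=\kappa\|w\|^2.
$$

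There is no real obstacle here. The only point needing care is that the bound must hold uniformly in $w$. This is guaranteed by working with the operator norm of the Hessian difference, rather than by taking a limit for each fixed $w$ separately.
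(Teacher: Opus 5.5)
Your proof is correct and follows the paper's argument essentially verbatim: the spectral theorem (the paper cites Rayleigh-quotient properties) for the first claim, then continuity of $\nabla^2\varphi$ in the operator norm with tolerance $\alpha-\kappa$ and the same one-line lower estimate for the second.
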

\begin{proof} Since $\varphi$ is $\mathcal{C}^2$-smooth, it follows that $\nabla^2 \varphi(\ox)$ is symmetric, and thus \eqref{symmetricmin} holds due to basic properties of Rayleigh's quotient. We only need to verify \eqref{neighbornorm}.   Indeed, we pick any $\kappa \in (0,\alpha)$, then it follows from the continuity of $\nabla^2\varphi$ at $\ox$, we can find $r>0$ such that 
$$
\norm{\nabla^2 \varphi(x) -\nabla^2\varphi(\ox)} \leq  \alpha - \kappa  \quad \text{for all }\; x \in B_r(\ox). 
$$
Therefore, for any $w \in \R^n, x \in B_r(\ox)$, we have the following estimates
\begin{align*}
    \dotproduct{\nabla^2\varphi(x)w,w} & =  \dotproduct{\nabla^2\varphi(\ox)w,w} + \dotproduct{(\nabla^2\varphi(x) - \nabla^2\varphi(\ox))w,w}\\
    & \geq \alpha \|w\|^2 - \norm{\nabla^2\varphi(x) - \nabla^2\varphi(\ox)}\|w\|^2 \\
    & \geq \alpha \|w\|^2 - (\alpha - \kappa) \|w\|^2 = \kappa \|w\|^2,
\end{align*}
which completes the proof. 
   
\end{proof} 

The following lemma establishes the existence of a descent direction, which plays an important role in the convergence analysis of the proposed Newton-type methods in this section.
\begin{Lemma}\label{existNM} Let $\varphi\colon \R^n \to \R$ be a $\mathcal{C}^2$-smooth function, a positive number $\zeta >0$, and let  $x\in\R^n$. Assume that $\nabla\varphi(x)\ne 0$, and choose $d\in\R^n$ according to
\begin{equation}\label{directionNM}
d=
\begin{cases}
\text{the solution of } -\nabla\varphi(x)=\nabla^2\varphi(x)d, & \text{if }\lambda_{\text{\rm min}}\left(\nabla^2\varphi(x)\right)\geq \zeta,\\[4pt]
-\nabla\varphi(x), & \text{otherwise}.
\end{cases}
\end{equation}
Then  we have $d \ne 0$, and 
\begin{equation}\label{xi}
\langle \nabla\varphi(x),d\rangle \leq - \xi \|d\|^2  \quad \text{where }\;  \xi:= \text{\rm min}\{1,\zeta\}>0. 
\end{equation}
Moreover, for every $\sigma\in(0,1)$ there exists $\delta>0$ such that
$$
\varphi(x+\tau d)\le \varphi(x) + \sigma\tau\langle \nabla \varphi(x),d\rangle 
\quad\text{for all }\;\tau\in(0,\delta).
$$
\end{Lemma}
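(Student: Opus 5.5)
We split into the two cases of \eqref{directionNM}. In the gradient case, $d=-\nabla\varphi(x)\neq 0$ by assumption, and $\langle\nabla\varphi(x),d\rangle=-\|d\|^2\le -\xi\|d\|^2$ since $\xi\le 1$. In the Newton case, set $H:=\nabla^2\varphi(x)$. It is symmetric with $\lambda_{\min}(H)\ge\zeta>0$, so $H$ is invertible and $d=-H^{-1}\nabla\varphi(x)$ is well defined. Moreover $d\ne0$, since otherwise $\nabla\varphi(x)=-Hd=0$. Substituting $\nabla\varphi(x)=-Hd$ and using the Rayleigh quotient bound, exactly as in \eqref{symmetricmin} of Proposition~\ref{uniformPD}, gives
\[
\langle\nabla\varphi(x),d\rangle=-\langle Hd,d\rangle\le-\zeta\|d\|^2\le-\xi\|d\|^2 .
\]
This proves $d\ne0$ together with \eqref{xi}.

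For the Armijo part, I would use the first-order expansion of $\varphi$ at $x$. Since $\varphi$ is differentiable, we have $\varphi(x+\tau d)=\varphi(x)+\tau\langle\nabla\varphi(x),d\rangle+o(\tau)$ as $\tau\downarrow0$. Then
\[
\varphi(x+\tau d)-\varphi(x)-\sigma\tau\langle\nabla\varphi(x),d\rangle=\tau\Big[(1-\sigma)\langle\nabla\varphi(x),d\rangle+\tfrac{o(\tau)}{\tau}\Big].
\]
By \eqref{xi} and $d\ne0$, the constant $c:=(1-\sigma)\langle\nabla\varphi(x),d\rangle$ is strictly negative. Choose $\delta>0$ so that $|o(\tau)/\tau|<-c$ for all $\tau\in(0,\delta)$. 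Then the bracket is negative on $(0,\delta)$, and the desired inequality follows.

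The only step requiring care is ensuring strict negativity of $\langle\nabla\varphi(x),d\rangle$. That is exactly what \eqref{xi} combined with $d\neq0$ provides. Everything else is routine, and no Lipschitz assumption is needed, since only local differentiability at $x$ is used.
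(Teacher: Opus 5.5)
Your proof is correct and follows essentially the same route as the paper: the two-case split with the Rayleigh-quotient bound from Proposition~\ref{uniformPD} in the Newton case, then a first-order expansion argument for the Armijo inequality that relies on $(1-\sigma)\langle\nabla\varphi(x),d\rangle<0$. You also spell out why $\nabla^2\varphi(x)$ is invertible and why $d\neq 0$ in the Newton case, which the paper only asserts.
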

\begin{proof} Let $x$  in $\R^n$, and assume that $\nabla\varphi(x) \ne 0$. Since $d$ is defined as in \eqref{directionNM}, we have $d \ne 0$.  If $\kappa:=  \lambda_{\rm min}(\nabla^2 \varphi(x))\geq \zeta$, it follows from Proposition \ref{uniformPD} that 
$$
\dotproduct{\nabla^2\varphi(x) w, w} \ge \kappa \|w\|^2 \geq \zeta \|w\|^2 \quad \text{for all } w \in \R^n.
$$ 
In this case, the direction $d$ in \eqref{directionNM} satisfies $
\langle \nabla\varphi(x), d \rangle   \leq -\zeta \|d\|^2.$ 
Otherwise, the direction $d$ is chosen as $d = -\nabla\varphi(x)$, which gives $
\langle \nabla\varphi(x), d \rangle = -\|d\|^2$. 
In both cases, we have $\langle \nabla\varphi(x), d \rangle \leq -\xi \|d\|^2 <0 $, where $\xi$ is defined as in \eqref{xi}. 
Hence,
$$
\lim_{t \to 0^+} \frac{\varphi(x+t d)-\varphi(x)}{t} = \langle \nabla\varphi(x), d \rangle < \sigma\langle \nabla\varphi(x), d \rangle 
$$
which implies the existence of $\delta>0$ such that
$$
\varphi(x+ \tau d) \le \varphi(x)  +\sigma \tau \langle \nabla\varphi(x), d \rangle  \quad \text{for all } \tau \in (0,\delta).
$$
The proof is complete.
\end{proof}

The following lemma is a local version of \cite[Lemma 2.20]{Solo14}, restated in \cite[Lemma 3]{kmptmp}.
\begin{Lemma} \label{estimate1} Let  $\Omega \subset \R^n$ be an open set, and let $\varphi:\Omega \to\R$ be a continuously differentiable function such that $\nabla\varphi$ is Lipschitz continuous with modulus $L >0$. Let $x \in \Omega$,  and $d \in \R^n\setminus\{0\}$  satisfy $\langle \nabla \varphi(x),d\rangle <0$. Then for any $\sigma\in (0,1)$, the following inequality 
\begin{equation}\label{est1}
\varphi(x+\tau d)\leq \varphi(x) + \sigma \tau \langle \nabla\varphi(x),d\rangle 
\end{equation}
holds whenever $\tau \in (0,\overline{\tau}]$ and $x+\tau d \in \Omega$, where
\begin{equation}\label{bartau}
\overline{\tau}:= \frac{2(\sigma-1)\langle \nabla\varphi(x),d\rangle}{L\|d\|^2}>0. 
\end{equation}
\end{Lemma}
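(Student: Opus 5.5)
The plan is to prove a local descent inequality on $\Omega$ and then compare it with \eqref{est1}. I will not use \eqref{descentlm}, because it is stated only for $\mathcal{C}^{1,1}_L$ functions on all of $\R^n$. Instead I will work directly from the fundamental theorem of calculus along the segment $[x,x+\tau d]$. The one issue to address is that this segment must lie in $\Omega$ so that the Lipschitz estimate applies at every point on it.

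\textbf{Step 1: the segment lies in $\Omega$.} The hypothesis literally only says $x+\tau d\in\Omega$. If $\Omega$ is convex, the whole segment is in $\Omega$ automatically. In the general case I would read the statement (as in \cite[Lemma 3]{kmptmp}) as requiring $x+t d\in\Omega$ for all $t\in[0,\tau]$, which is the form in which it is applied, since $\Omega$ is a ball in later uses. I regard pinning down this interpretation as the only genuine subtlety.

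\textbf{Step 2: the descent inequality.} Assuming the segment is in $\Omega$, I write
$$
\varphi(x+\tau d)-\varphi(x)-\tau\langle\nabla\varphi(x),d\rangle=\int_0^1\langle \nabla\varphi(x+s\tau d)-\nabla\varphi(x),\tau d\rangle\,ds .
$$
Applying Cauchy--Schwarz and then the Lipschitz bound $\|\nabla\varphi(x+s\tau d)-\nabla\varphi(x)\|\le Ls\tau\|d\|$ bounds the right-hand side by
$$
\int_0^1 Ls\tau^2\|d\|^2\,ds=\frac{L}{2}\tau^2\|d\|^2 .
$$
This gives the local analogue of \eqref{descentlm}:
$$
\varphi(x+\tau d)\le\varphi(x)+\tau\langle\nabla\varphi(x),d\rangle+\frac L2\tau^2\|d\|^2 .
$$

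\textbf{Step 3: comparison with \eqref{est1}.} It now suffices to show
$$
\tau\langle\nabla\varphi(x),d\rangle+\frac L2\tau^2\|d\|^2\le\sigma\tau\langle\nabla\varphi(x),d\rangle .
$$
Dividing by $\tau>0$, this is equivalent to
$$
\frac L2\tau\|d\|^2\le(\sigma-1)\langle\nabla\varphi(x),d\rangle .
$$
This holds exactly when $\tau\le\overline\tau$, with $\overline\tau$ as in \eqref{bartau}.

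\textbf{Step 4: positivity of $\overline\tau$.} We have $\sigma-1<0$ and $\langle\nabla\varphi(x),d\rangle<0$, so the numerator of \eqref{bartau} is positive. Since $d\neq0$, the denominator $L\|d\|^2$ is also positive, hence $\overline\tau>0$. This completes the argument.
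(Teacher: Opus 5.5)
Your argument is correct and is the standard one behind the cited result; the paper gives no proof of its own and defers to \cite[Lemma 2.20]{Solo14} and \cite[Lemma 3]{kmptmp}, which proceed as you do: a descent-lemma bound along the segment $[x,x+\tau d]$, then solving the resulting linear inequality in $\tau$ to get exactly $\overline{\tau}$. Your Step~1 caveat is in fact necessary, because for nonconvex $\Omega$ the literal hypothesis $x+\tau d\in\Omega$ does not suffice: take $\Omega=(-1,1)\cup(2,4)$, $\varphi(t)=-t$ on $(-1,1)$ and $\varphi(t)=100-t$ on $(2,4)$, $x=0$, $d=1$, $L=0.1$, $\sigma=1/2$, so that $\overline{\tau}=10$, and then $\tau=3$ would require $97\le -3/2$; in every use in the paper, however, $\Omega$ is $\R^n$ or a ball $B_\delta(\bar{x})$, where the segment condition holds automatically.
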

 
We are now ready to present the main results of this section, including the well-posedness of the proposed algorithm, the characterization of accumulation points, and conditions that ensure the convergence of the generated iterative sequence.
\begin{Theorem}[\bf stationarity of accumulation points for globalized  Newton methods]\label{globalconver} Let $\varphi\colon \R^n \to \R$ be a $\mathcal{C}^2$-smooth function. Then Algorithm~{\rm\ref{globalaNM}} either terminates after finitely many iterations or generates sequences $\{x^k\}$ and $\{\hat{x}^k\}$ such that the sequences of function values $\{\varphi(x^k)\}$ and $\{\varphi(\hat{x}^k)\}$  are monotonically decreasing. Moreover, we have the following assertions:
\begin{itemize}
\item[\bf (i)] If $\inf \varphi(x^k) >-\infty$, then $\displaystyle\sum_{k=1}^\infty\tau_k \norm{d^k}^2 <\infty.$ 
\item[\bf (ii)] If $\varphi$ is $\mathcal{C}^{1,1}_L$ for some $L>0$ then the sequence $\{\tau_k\}$ is bounded away from $0$.  Consequently, the sequences $\left\{\norm{d^k} \right\}$, $\left\{\norm{\nabla\varphi(\hat{x}^k)} \right\}$ converge to $0$ as $k \to \infty$.
\item[\bf (iii)] If the sequence $\{\hat{x}^k\}$ satisfies the condition \begin{equation}\label{approximates}
\norm{\hat{x}^k-x^k} \overset{J}{\to} 0   \quad \text{if } J \subset \N \; \text{and } \; \{x^k\}_{k \in J} \; \text{is convergent}. 
\end{equation}
Then every accumulation point of $\{x^k\}$ is a stationary point of $\varphi$. If in addition that $\ox$ is  an isolated accumulation point, then $\{x^k\}$ converges to $\ox$ as $k\to \infty.$  
\end{itemize}
\end{Theorem}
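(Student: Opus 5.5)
Well-posedness comes first. At each iteration either $\nabla\varphi(\hat{x}^k)=0$, in which case the algorithm stops, or Lemma~\ref{existNM} applies at $x=\hat{x}^k$. That lemma gives $d^k\neq 0$, the bound $\langle\nabla\varphi(\hat{x}^k),d^k\rangle\le-\xi\|d^k\|^2$ with $\xi=\min\{1,\zeta\}$, and an interval $(0,\delta)$ on which the Armijo inequality holds. Since $\beta^i\to 0$, some $\tau=\beta^i$ is accepted, so $\tau_k$ is well defined (the largest accepted power, with $\tau_k\le 1$). Combining the Armijo condition with the descent bound gives the key inequality
\[
\varphi(x^{k+1})\le\varphi(\hat{x}^k)-\sigma\xi\tau_k\|d^k\|^2\le\varphi(x^k)-\sigma\xi\tau_k\|d^k\|^2 .
\]
This shows that $\{\varphi(x^k)\}$ is decreasing. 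Since $\varphi(\hat{x}^{k+1})\le\varphi(x^{k+1})\le\varphi(\hat{x}^k)$, the sequence $\{\varphi(\hat{x}^k)\}$ is decreasing as well.

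For (i), sum the key inequality telescopically. This gives $\sigma\xi\sum_{k}\tau_k\|d^k\|^2\le\varphi(x^1)-\inf_k\varphi(x^k)<\infty$.

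For (ii), use Lemma~\ref{estimate1} with $\Omega=\R^n$. Every $\tau\in(0,\bar\tau_k]$ satisfies the Armijo condition, where
\[
\bar\tau_k=\frac{2(1-\sigma)|\langle\nabla\varphi(\hat{x}^k),d^k\rangle|}{L\|d^k\|^2}\ge\frac{2(1-\sigma)\xi}{L}.
\]
The backtracking therefore stops at some $\tau_k\ge\min\{1,\,2\beta(1-\sigma)\xi/L\}=:\underline\tau>0$. If instead $\tau_k<1$, then $\tau_k/\beta$ was rejected, so $\tau_k/\beta>\bar\tau_k$. To conclude $\|d^k\|\to0$, I also need $\inf_k\varphi(x^k)>-\infty$ in order to use (i). I would read (ii) as taken together with that hypothesis, or else assume $\varphi$ bounded below; this should be stated explicitly. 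Given it, (i) yields $\underline\tau\sum_k\|d^k\|^2<\infty$, hence $d^k\to0$. For the gradient, in the gradient branch $\|\nabla\varphi(\hat{x}^k)\|=\|d^k\|$. In the Newton branch $\nabla\varphi(\hat{x}^k)=-\nabla^2\varphi(\hat{x}^k)d^k$, and $\mathcal C^{1,1}_L$ gives $\|\nabla^2\varphi\|\le L$, so $\|\nabla\varphi(\hat{x}^k)\|\le L\|d^k\|\to0$.

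For (iii), let $x^k\overset{J}{\to}\bar x$. By \eqref{approximates}, $\hat{x}^k\overset{J}{\to}\bar x$. The function values $\varphi(x^k)$ decrease and have a convergent subsequence, so they are bounded below and (i) applies. Without global Lipschitz continuity of the gradient I must argue locally, and this is the main step. Suppose $\nabla\varphi(\bar x)\ne0$. On a closed ball $\bar B$ around $\bar x$, $\nabla^2\varphi$ is bounded, say by $M$, so $\nabla\varphi$ is $M$-Lipschitz there. For large $k\in J$ I bound $\|d^k\|$ in both branches. The gradient branch gives $\|d^k\|=\|\nabla\varphi(\hat{x}^k)\|$. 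The Newton branch gives $\|d^k\|\le\zeta^{-1}\|\nabla\varphi(\hat{x}^k)\|$ from below, and the lower bound $\|d^k\|\ge\|\nabla\varphi(\hat{x}^k)\|/M$ keeps $\|d^k\|$ bounded away from $0$. Consequently $\tau_k\to0$ along $J$ by (i).

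The bounds on $\|d^k\|$ also mean $\hat{x}^k+(\tau_k/\beta)d^k$ stays in the ball for large $k$. Lemma~\ref{estimate1} on the open ball then forces $\tau_k/\beta\ge\bar\tau_k\ge2(1-\sigma)\xi/M$ whenever $\tau_k<1$. This contradicts $\tau_k\to0$, so $\nabla\varphi(\bar x)=0$.

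For the isolated case, I would invoke the standard Ostrowski-type lemma: if $\bar x$ is an isolated accumulation point and $\|x^{k+1}-x^k\|\overset{J}{\to}0$ along every subsequence converging to $\bar x$, then $x^k\to\bar x$. Along such $J$, $\|x^{k+1}-x^k\|\le\|\hat{x}^k-x^k\|+\tau_k\|d^k\|$. The first term tends to $0$ by \eqref{approximates}. The second tends to $0$ because $\tau_k\le1$ and $d^k$ is bounded near $\bar x$ with $\tau_k\|d^k\|^2\to0$, giving $\tau_k\|d^k\|\le\sqrt{\tau_k\|d^k\|^2}\to0$. The delicate point is the local version of this lemma: steps need only be small near $\bar x$, and I would reprove it by the usual annulus argument using $\varphi$-continuity-free reasoning on the ball.
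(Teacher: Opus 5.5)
Your proposal is correct and follows essentially the paper's route: Lemma~\ref{existNM} for well-posedness and the decrease inequality, telescoping for (i), Lemma~\ref{estimate1} globally for (ii) and on a ball around $\bar x$ for (iii), and the Ostrowski-type lemma for the isolated case; the only differences are cosmetic, namely arguing stationarity by contradiction and using $\tau_k\|d^k\|\le\sqrt{\tau_k\|d^k\|^2}$ in place of the paper's local lower bound $\tau_k\ge\gamma$ to get small steps. Your caveat on (ii) is justified, since the paper's proof of $d^k\to 0$ also invokes (i) and $\varphi(x)=-x$ with $\hat{x}^k=x^k$ gives $\|d^k\|=1$ for all $k$, and your bound $\tau_k\ge\min\{1,2\beta(1-\sigma)\xi/L\}$ correctly covers the case $\tau_k=1$, which the paper's inequality $\beta^{-1}\tau_k>\dots$ glosses over.
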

\begin{proof}  Lemma~\ref{existNM}  easily ensures by induction that Algorithm~\ref{globalaNM} either stops after finitely many iterations, or produces   sequences $\{x^k\}$ and $ \{\hat{x}^k\}$ such that $\varphi(x^{k+1})<\varphi(x^k)$ and $\varphi(\hat{x}^{k+1})<\varphi(\hat{x}^k)$  for all $k\in\N$. It follows from Lemma \ref{existNM} that 
\begin{equation}\label{boundeddirection}
\dotproduct{\nabla\varphi(\hat{x}^k),d^k}  \leq -\xi \norm{d^k}^2 \quad \text{for all }\; k \in \N, \; \text{where } \xi:=\text{\rm min}\{1,\zeta\}>0.
\end{equation}
We first verify the assertion {\bf (i)}. Indeed, by \eqref{boundeddirection} and the construction of $\{x^k\}$ in Algorithm \ref{globalaNM}, we have 
\begin{equation}\label{sumconverges}
\tau_k \|d^k\|^2 \leq \frac{1}{\xi}\tau_k \langle -\nabla\varphi(\hat{x}^k),d^k\rangle \leq \frac{1}{\xi\sigma}\left(\varphi(\hat{x}^k)-\varphi(x^{k+1}) \right)\leq \frac{1}{\xi\sigma}\left(\varphi(x^k)-\varphi(x^{k+1}) \right)
\end{equation}
Since the sequence $\{\varphi(x^k)\}$ is nonincreasing, and $\inf  \varphi(x^k) >-\infty$, the sequence $\{\varphi(x^k)\}$ must converge to a number $\varphi^*$ as $k\to\infty$. Combining the latter with \eqref{sumconverges}, we  have  
$$
\sum_{k=1}^\infty \tau_k \norm{d^k}^2 \leq \frac{1}{\xi\sigma}\left(\varphi(x^0) -\varphi^*\right) <\infty,
$$
which justifies {\bf (i)}. To verify {\bf (ii)}, we assume further that $\varphi$ is $\mathcal{C}^{1,1}_L$ for some $L>0.$ Applying Lemma~\ref{estimate1} and using \eqref{boundeddirection}, we have 
$$
\beta^{-1}\tau_{k} > \frac{2(\sigma -1)\dotproduct{\nabla \varphi(\hat{x}^{k}),d^{k}}}{L\norm{d^{k}}^2} \geq \frac{2(1-\sigma)\xi}{L}>0,
$$
which justifies the boundedness of $\{\tau_k\}$ away from $0$. Combining this with {\bf (i)}, we deduce that the sequence $\left\{\norm{d^k} \right\}$ converges to $0$ as $k \to \infty.$ Moreover, since $\varphi$ is $\mathcal{C}^{1,1}_L$, we have 
$$
\norm{\nabla^2\varphi(x)w}\leq L\|w\| \quad \text{for all }\; w \in \R^n,
$$
which implies that 
$$
\norm{\nabla\varphi(\hat{x}^k)} \leq \text{\rm max} \left\{ \norm{d^k}, \norm{\nabla^2\varphi(\hat{x}^k)d^k} \right\} \leq \text{\rm max} \left\{ \norm{d^k}, L\norm{d^k} \right\} = \text{\rm max}  \left\{1,L \right\} \norm{d^k}
$$
for all $k \in \N.$ Combining the latter with the fact that $d^k \to 0$ as $k \to \infty$, we obtain the convergence of $\left\{\norm{\nabla\varphi(\hat{x}^k)} \right\}$ to $0$ as $k \to \infty.$ 

\medskip 
It remains to verify {\bf (iii)}. To proceed, let $\ox$ be an accumulation point of the sequence $\{x^k\}$.
Since $\ph$ is of class ${\cal C}^{2}$ around $\bar{x}$, we can find $\delta>0$ such  that $\nabla\varphi$ is Lipschitz continuous on $B_\delta(\bar{x})$ with some constant $\ell>0$, which implies that 
\begin{equation}\label{uniformLip}
\norm{\nabla^2\varphi(x)w}\leq\ell\|w\|\quad\text{for all }\; x\in B_\delta(\bar{x}),\;\mbox{ and }\;w\in\R^n.
\end{equation}

The rest of the proof is split into the following  claims.

\medskip \noindent
{\bf Claim~1:} {\em For any subset $J\subset \N$  such that $x^{k} \overset{J}{\to}\bar{x}$, the corresponding sequence $\{\tau_{k}\}_{k \in J}$ in Algorithm~{\rm\ref{globalaNM}} is bounded from below by some $\gamma>0$, the corresponding sequence $\{d^{k}\}_{k \in J}$ is bounded, and we have
\begin{equation}\label{dkLips}
\norm{\nabla \varphi(\hat{x}^{k})} \leq \alpha \norm{d^{k}} \quad \text{where }\; \alpha := \text{\rm max}\{1,\ell\},
\end{equation}
\begin{equation}\label{xk+1}
\varphi(x^{k})-\varphi(x^{k+1}) \ge\sigma\gamma\xi\norm{d^{k}}^2,
\end{equation}
for all large $k\in J$}. 

\medskip Since  $x^{k} \overset{J}{\to} \ox$  and \eqref{approximates} holds, we have $\hat{x}^{k} \overset{J}{\to} \ox$. Combining this with \eqref{uniformLip},  the construction of the sequence $\{\hat{x}^k\}$ implies \eqref{dkLips}.  
Moreover, it follows from \eqref{boundeddirection} that the Cauchy-Schwarz inequality yields $\norm{\nabla\varphi(\hat{x}^{k})}\ge\xi\norm{d^{k}}$ for all $k \in \N$. Since $\hat{x}^{k} \overset{J}{\to}\bar{x}$, the latter estimate verifies the boundedness of the sequence of directions $\{d^{k}\}_{k \in J}$. It remains to show that  $\{\tau_{k}\}_{k\in J}$ is bounded from below by a positive number. Indeed, supposing on the contrary that the opposite holds and combining this with $\tau_k\ge 0$ give us a subsequence of $\{\tau_{k}\}_{k\in J}$ that converges to $0$. Assume without loss of generality that $\tau_{k} \overset{J}{\to} 0$.
 Thus $\hat{x}^{k}+\tau_{k}d^{k} \overset{J}{\to} \bar{x}$, and hence $
\hat{x}^{k}+\tau_{k}d^{k}\in B_\delta(\bar{x})$ whenever $k \in J$ is sufficiently large. Applying Lemma~\ref{estimate1}, we have 
$$
\beta^{-1}\tau_{k} > \frac{2(\sigma -1)\dotproduct{\nabla \varphi(\hat{x}^{k}),d^{k}}}{\ell\norm{d^{k}}^2} \geq \frac{2(1-\sigma)\xi}{\ell},
$$
where the second inequality follows from \eqref{boundeddirection}.  
The fact that $\tau_{k} \overset{J}{\to} 0$ gives us $\sigma\ge 1$, a contradiction due to the choice of $\sigma$. Hence there exists $\gamma>0$ such that $\tau_{k}\ge\gamma$ for all $k \in J$. Moreover, using the estimate in \eqref{boundeddirection} allows us to obtain the following 
\begin{equation}\label{ineq}
\varphi(x^{k})-\varphi(x^{k+1})\ge \varphi(\hat{x}^k)-\varphi(x^{k+1})\geq\sigma\tau_{k}\dotproduct{-\nabla\varphi(\hat{x}^{k}),d^{k}}\ge\sigma\gamma\xi\norm{d^{k}}^2 \quad \text{for all }\; k \in J,
\end{equation}
which therefore justifies Claim 1.

\medskip \noindent
{\bf Claim~2:} {\em $\ox$ is a stationary point of $\varphi$.} 

\medskip Indeed, since $\bar{x}$ is an accumulation point of $\{x^k\}$, there exists a subset $J \subset \N$ such that the subsequence $\{x^{k}\}_{k \in J}$ converges to $\bar{x}$, i.e., $x^{k} \overset{J}{\to} \bar{x}$.   As in the proof of  Claim~1,  we have $\hat{x}^k \overset{J}{\to} \ox$, and we can find $\gamma > 0$ such that \eqref{xk+1} holds. Since the sequence $\{\varphi(x^k)\}$ is nonincreasing and since $\varphi(\bar{x})$ is {an accumulation point} of $\{\varphi(x^k)\}$, the sequence $\{\varphi(x^k)\}$ must converge to $\varphi(\bar{x})$ as $k\to\infty$. Combining the latter with the inequality \eqref{xk+1}, we have $\norm{d^{k}} \overset{J}{\to} 0$.  Therefore, \eqref{dkLips} tells us that $\norm{\nabla\varphi(\hat{x}^k)} \overset{J}{\to} 0$, which implies that   $\nabla\varphi(\bar{x})=0$, which readily justifies Claim~2.

\medskip \noindent
{\bf Claim~3:} {\em $x^k \to \ox$ as $k \to \infty$ if $\ox$ is an isolated accumulation point.} 

\medskip To verify this, we use Ostrowski's condition from \cite[Proposition~8.3.10]{JPang}. Supposing next that $\{x^{k}\}_{k\in J}$ is an arbitrary subsequence of $\{x^k\}$ with $x^{k}\overset{J}{\to}\bar{x}$ for some $J\subset \N$, we need to check that
\begin{equation}\label{Ostrowski}
\lim_{ k (\in J)  \to \infty}\norm{x^{k+1}- x^{k}} =0.
\end{equation}
Indeed, Claim~1 gives us $\gamma>0$ such that \eqref{xk+1} holds, which implies in turn that
\begin{align*}
\norm{x^{k+1}- x^{k}} & \leq \norm{x^{k+1}-\hat{x}^k} + \norm{\hat{x}^k-x^k} \\    
& = \tau_k \norm{d^k} + \norm{\hat{x}^k -x^k}\\
& \leq\norm{d^k} +  \norm{\hat{x}^k -x^k}\\
& \leq  \sqrt{\frac{1}{\sigma\gamma\xi}\left(\varphi(x^{k})-\varphi(x^{k+1}) \right)} + \norm{\hat{x}^k -x^k}
\end{align*}
Since \eqref{approximates} holds, we must have 
$$
\lim_{ k (\in J)  \to \infty} \left(\sqrt{\frac{1}{\sigma\gamma\xi}\left(\varphi(x^{k})-\varphi(x^{k+1}) \right)} + \norm{\hat{x}^k -x^k}\right) = 0,
$$
and hence verifies \eqref{Ostrowski}. Finally, it follows from \cite[Proposition~8.3.10]{JPang} that the sequence $\{x^k\}$ converges to $\bar{x}$ as $k\to\infty$, which therefore completes the proof of Claim~3. The proof is complete. 
    
\end{proof}

\begin{Remark}[\bf on the condition of approximate step]\rm As in Theorem \ref{globalconver}, condition \eqref{approximates} is sufficient to ensure the convergence of the iterative sequence $\{x^k\}$ generated by Algorithm \ref{globalaNM}. A natural question is how to choose $\hat{x}^k$ so that this condition is satisfied. We will show that the two choices described in Remark \ref{choicestep} both meet this requirement. Observe the following:
\begin{itemize}
   \item[\bf (i)] A convenient choice is $\hat{x}^k = x^k$, in which case Algorithm \ref{globalaNM} reduces to the {\em hybrid gradient-Newton method} discussed in Remark \ref{choicestep}. In this setting, \eqref{approximates} is automatically satisfied.

\item[\bf (ii)] If $\varphi$ is $\mathcal{C}^{1,1}_L$ for some $L>0$, another option is to set $\hat{x}^k = x^k - \lm \nabla \varphi(x^k)$, where $\lm \in (0,1/L)$. In this case, Algorithm \ref{globalaNM} reduces to the {\em extragradient Newton method} discussed in Remark \ref{choicestep}. We now show that \eqref{approximates} is satisfied in this case. Indeed, for any $k \in \N$, we have the following estimates:
\begin{align*}
    \norm{\hat{x}^k - x^k} = \lm \norm{\nabla\varphi(x^k)} & \leq \lm \left(\norm{\nabla\varphi(x^k) - \nabla\varphi(\hat{x}^k)} + \norm{\nabla\varphi(\hat{x}^k)} \right)\\
    & \leq \lm \left(L\norm{x^k -\hat{x}^k} + \norm{\nabla\varphi(\hat{x}^k)} \right),
\end{align*}
 which implies that 
 $$
 \norm{\hat{x}^k -x^k} \leq \frac{\lm}{1-\lm L} \norm{\nabla\varphi(\hat{x}^k)} \quad \text{for all }\; k \in \N. 
 $$
Combining this with the assertion {\bf (ii)} of Theorem \ref{globalconver}, we deduce that $\norm{\hat{x}^k - x^k} \to 0$ as $k \to \infty$, which tells us that \eqref{approximates} is satisfied. 

\end{itemize}
\end{Remark}

\section{Convergence under Polyak-\L ojasiewicz-Kurdyka Conditions}\label{sec:PLK}
This section is devoted to providing a convergence analysis of our Newton-type methods without requiring the accumulation point to be isolated, as assumed in Theorem \ref{globalconver}. Our analysis relies on a {\em Polyak--\L ojasiewicz--Kurdyka type condition}, which serves as a fundamental assumption for establishing the convergence of several well-known numerical methods, including inexact proximal point methods, inexact proximal gradient methods, and the boosted DC algorithm; see, for example, \cite{bento,kmptjogo25}.

\medskip For functions $\ph$ of class $\mathcal{C}^{1,1}$ (i.e., continuously differentiable with Lipschitz continuous gradients), Polyak introduced the condition
\begin{equation*}
\norm{\nabla\ph(x)}\ge \frac{1}{2M} |\ph(x)-\ph(\bar{x})|^{1/2}, \quad M>0
\end{equation*}
in his paper \cite{polyak1963gradient} and used it to establish linear convergence of gradient descent in Hilbert spaces. Independently, \L ojasiewicz \cite{Lojasiewicz1963} derived the inequality
\begin{equation}\label{pl}
\norm{\nabla\ph(x)}\geq b|\ph(x)-\ph(\bar{x})|^q,\quad b:=1/M(1-q),\quad q\in[0,1),
\end{equation}
for real analytic functions in a finite-dimensional setting. Inequality \eqref{pl} is commonly known as the {\em Polyak-\L ojasiewicz} (PL) condition; see, e.g., \cite{karimi2016linear}. It was later extended by Kurdyka \cite{Kurdyka1998} to definable functions, and further generalized to nonsmooth settings by Bolt\'e et al.\ \cite{bolte2007lojasiewicz} as the {\em Kurdyka-\L ojasiewicz} (KL) inequality. We adopt the term {\em Polyak-\L ojasiewicz-Kurdyka} (PLK) conditions for this class of properties.  For the purposes and scope of this paper, we recall the PLK condition under the assumption that the function $\varphi$ is smooth, for the convenience of the reader. The definition is given below. 

\begin{Definition}[\bf PLK conditions]\label{def:kl} \rm  Let $\ph:\R^n \to\R$ be a $\mathcal{C}^1$-smooth function. We say that the function $\ph$ satisfies:

\begin{itemize}
    \item[\bf (i)] The {\em Polyak-\L ojasiewicz-Kurdyka} (PLK) {\em condition} at $\bar{x}\in{\rm dom}\,\ph$ if there exist a number $\eta \in (0,\infty)$, a neighborhood $U$ of $\bar{x}$, and a concave continuous function $\phi: [0,\eta] \to[0,\infty)$, called the {\em desingularizing function}, such that
\begin{equation}\label{kl0}
\phi(0)=0,\quad\phi\in \mathcal{C}^1(0,\eta),\quad\phi'(s)>0\;\mbox{ for all }\; s\in(0,\eta),\;\mbox{ and }\;
\end{equation}
\begin{equation}\label{desigualdadeklND}
\phi'\big(\ph(x)-\ph(\bar{x})\big)\norm{\nabla\varphi(x)} \ge 1\;\mbox{ for all }\;x\in U\cap[\ph (\bar x) <\ph(x)<\ph(\Bar{x})+\eta]
\end{equation}
\item[\bf (ii)]  The {\em exponent} version of the {\em PLK conditions}  if the desingularizing function in \eqref{kl0}  and \eqref{desigualdadeklND} is selected in the form $\phi(t)=M t^{1-q}$, where $M$ is a positive constant, and where $q\in[0,1)$. We refer to the case where $q\in(0,1/2)$ as the {\em PLK conditions with lower exponents}.
\end{itemize}
\end{Definition}

We next recall the convergence analysis of a general class of descent methods from \cite{bento}. Let the sequence $\{x^k\}$ be generated by a generic algorithm, and assume that the differentiable function $\varphi$ satisfies the following properties:
\begin{itemize}
\item[\textbf{($\mathcal{H}1$)}]\textit{Sufficient decrease}: for each $k \in \N$, we have
\begin{equation}\label{desigualdade1kNDH}
\ph(x^{k+1}) + a \norm{x^{k+1} - x^k}^2 \le\ph(x^k).
\end{equation}
\item[\textbf{($\mathcal{H}2$})] \textit{Relative error}: for each $k\in \N$, there exists
\begin{equation}\label{fran}
\norm{\nabla \varphi(x^k) } \leq b \norm{x^{k+1} - x^k},
\end{equation}
\end{itemize}
with $a,b>0$. The following results are taken from \cite[Theorems 1 and 4]{bento}.

\begin{Lemma}[\bf convergence analysis under PLK condition]\label{PLKlemma} Let $\ph :\R^n \to\R$ be a $\mathcal{C}^1$-smooth function bounded from below, and let the sequence $\{x^k\}$ be constructed by the generic algorithms satisfying either $(\mathcal{H}1)$ and $(\mathcal{H}2)$  properties. If the   PLK condition holds at some accumulation point $\bar{x}\in \R^n$ of $\{x^k\}$, then   $\{x^k\}$ converges to $\bar{x}$ as $k\to\infty$. Moreover, $\bar{x}$ is  a stationary point of $\ph$. If in addition that the exponent PLK property of $\ph$ holds at $\bar{x}$ with $\varphi(t)=Mt^{1-q}$ for some $M > 0$ and $q\in [0,1)$, then the following convergence rates are guaranteed: 
\begin{itemize}
\item[\bf(i)] If $q\in [0, \frac{1}{2})$, then $\{x^k\}$ and $\{\ph(x^k)\}$ are terminated in a finite number of steps to $\bar x$ and $\bar{\ph}:=\ph(\bar x)$, respectively.

\item[\bf(ii)] If $q=\frac{1}{2}$, then the sequences $\{x^k\}$ and $\{\ph(x^k)\}$ converge linearly at $\bar{x}$ and $\bar{\ph}$, respectively.

\item[\bf(iii)] If $\displaystyle q \in(1/2,1)$, then there exists a positive constant $\sigma$ such that
$$
\left\|x^k - \bar{x} \right\| \leq \sigma k^{-\frac{1-q}{2q-1}} \quad \text{for all large }\;k\in\N.
$$
\end{itemize}
\end{Lemma}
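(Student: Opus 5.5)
The plan is to follow the classical Attouch--Bolte--Svaiter argument, which is essentially what \cite[Theorems 1 and 4]{bento} do. Set $\bar\varphi:=\varphi(\bar x)$ and $r_k:=\varphi(x^k)-\bar\varphi$.

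\textbf{Step 1: the function values.} By $(\mathcal{H}1)$ the sequence $\{\varphi(x^k)\}$ is nonincreasing. It is bounded below, so it converges. Along a subsequence $x^k\overset{J}{\to}\bar x$, continuity of $\varphi$ then forces $\varphi(x^k)\downarrow\bar\varphi$, so $r_k\ge 0$ and $r_k\to0$.

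If $r_{k_0}=0$ for some $k_0$, monotonicity gives $r_k=0$ for all $k\ge k_0$. Then $(\mathcal{H}1)$ yields $x^{k+1}=x^k$ from $k_0$ on, and $(\mathcal{H}2)$ yields $\nabla\varphi(x^{k_0})=0$. So the sequence is eventually constant and equal to $\bar x$, and everything holds trivially. Hence we may assume $r_k>0$ for all $k$.

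\textbf{Step 2: the key one-step inequality.} Suppose $x^k\in U$ and $0<r_k<\eta$. Concavity of $\phi$ together with \eqref{desigualdadeklND} gives
$$
\phi(r_k)-\phi(r_{k+1})\ge \phi'(r_k)(r_k-r_{k+1})\ge \frac{r_k-r_{k+1}}{\|\nabla\varphi(x^k)\|}.
$$
Now bound the numerator from below by $(\mathcal{H}1)$ and the denominator from above by $(\mathcal{H}2)$. This gives
$$
\|x^{k+1}-x^k\|\le \frac{b}{a}\bigl(\phi(r_k)-\phi(r_{k+1})\bigr).
$$
Here the same index $k$ appears in $(\mathcal{H}2)$ as in the step, so no extra triangle-inequality term is needed.

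\textbf{Step 3: keeping the iterates in $U$.} This is the step I expect to be the main obstacle. Choose $\rho>0$ with $B_\rho(\bar x)\subset U$. Then choose $k_0\in J$ large enough that
$$
\|x^{k_0}-\bar x\|+\frac{b}{a}\phi(r_{k_0})<\rho\quad\text{and}\quad r_{k_0}<\eta.
$$
By induction, summing the inequality of Step 2 from $k_0$ to $k$, every $x^k$ with $k\ge k_0$ stays in $B_\rho(\bar x)$. Summing over the telescoping right-hand side then gives $\sum_k\|x^{k+1}-x^k\|<\infty$. Hence $\{x^k\}$ is Cauchy and converges, necessarily to its accumulation point $\bar x$. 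Finally, $(\mathcal{H}2)$ gives $\nabla\varphi(x^k)\to0$, so $\nabla\varphi(\bar x)=0$.

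\textbf{Step 4: rates.} For $\phi(t)=Mt^{1-q}$, combine $(\mathcal{H}1)$, $(\mathcal{H}2)$ and \eqref{desigualdadeklND} for large $k$:
$$
r_k-r_{k+1}\ge a\|x^{k+1}-x^k\|^2\ge \frac{a}{b^2}\|\nabla\varphi(x^k)\|^2\ge C\,r_k^{2q},\qquad C:=\frac{a}{b^2M^2(1-q)^2}.
$$
The three cases then follow from this recursion.
\begin{itemize}
\item[(i)] If $q<1/2$, then $r_k^{2q}\gg r_k$ as $r_k\to0$, which contradicts $r_k-r_{k+1}\le r_k$ unless $r_k=0$ eventually. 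By Step 1 this gives finite termination.
\item[(ii)] If $q=1/2$, then $r_{k+1}\le(1-C)r_k$, which is linear convergence of the function values.
\item[(iii)] If $q\in(1/2,1)$, the standard comparison of $r_k^{1-2q}$ with $t\mapsto t^{-2q}$ gives $r_{k+1}^{1-2q}-r_k^{1-2q}\ge c>0$. Hence $r_k=O(k^{-1/(2q-1)})$.
\end{itemize}
The iterate rates follow from the tail bound obtained in Step 2,
$$
\|x^k-\bar x\|\le\sum_{j\ge k}\|x^{j+1}-x^j\|\le \frac{b}{a}Mr_k^{1-q}.
$$
In case (ii) this gives linear convergence of $\{x^k\}$, and in case (iii) it gives $\|x^k-\bar x\|=O(k^{-(1-q)/(2q-1)})$.
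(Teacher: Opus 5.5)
Your proof is correct. It is the standard Attouch--Bolte--Svaiter argument, which is what the paper relies on: the paper gives no proof of this lemma and imports it from \cite[Theorems 1 and 4]{bento}, which argue along exactly these lines (concavity of $\phi$ with $(\mathcal{H}1)$--$(\mathcal{H}2)$ at the same index, localization in $U$ by induction, finite length, then the recursion $r_k-r_{k+1}\ge C\,r_k^{2q}$ for the rates, with the iterate rates from the tail bound $\|x^k-\bar x\|\le \frac{b}{a}M r_k^{1-q}$).
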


Now we are ready to derive our convergence analysis for Algorithm \ref{globalaNM} under the PLK condition. 

\begin{Theorem}[\bf convergence of Newton-type methods under PLK condition] \label{converPLK} In the setting of Theorem~\ref{globalconver}, assume that $\inf \varphi(x^k) >-\infty$, $\nabla \varphi$ is Lipschitz continuous with modulus $L>0$,  and  there exists $c>0$ such that for every $k \in \N$, we have  
\begin{equation}\label{newconditionforhatx}
\dotproduct{\nabla \varphi(x^k), \hat{x}^k-x^k} \leq -c \norm{x^k-\hat{x}^k}^2,
\end{equation}
and 
\begin{equation}\label{constant}
c > \frac{\alpha L}{2\beta \xi} +L,\;  0 < \sigma < 1 - \frac{\alpha L}{2\beta \xi (c-L)}, \; \text{where } \xi:=\text{\rm min}\{1,\zeta\}, \; \alpha:= \max\{1,L\}.  
\end{equation} 
Let $\bar{x}$ be an accumulation point of the sequence $\{x^k\}$ in which the PLK condition holds. Then the entire sequence $\{x^k\}$ converges to $\bar{x}$, which is a stationary point of $\varphi$. If furthermore $\ph$ enjoys the exponent PLK property with $\varphi(t)=M t^{1-q}$ for some $M>0$ and $q\in[0,1)$, then the convergence rates of $x^k\to\bar x$ are as given in Lemma \ref{PLKlemma}. 
\end{Theorem}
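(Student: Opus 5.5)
The plan is to verify the two hypotheses $(\mathcal{H}1)$ and $(\mathcal{H}2)$ of Lemma~\ref{PLKlemma} for the sequence $\{x^k\}$ generated by Algorithm~\ref{globalaNM}. Once this is done, Lemma~\ref{PLKlemma} gives convergence of the whole sequence to $\bar x$, stationarity of $\bar x$, and the stated rates. Since $\inf\varphi(x^k)>-\infty$, we may apply the lemma to the monotone sequence, replacing $\varphi$ by a function bounded below on the relevant sublevel set if needed. Throughout we use three facts from the proof of Theorem~\ref{globalconver}(ii): the estimate $\langle\nabla\varphi(\hat x^k),d^k\rangle\le-\xi\|d^k\|^2$, the bound $\|\nabla\varphi(\hat x^k)\|\le\alpha\|d^k\|$ with $\alpha=\max\{1,L\}$, and the stepsize bound $\tau_k\ge 2\beta(1-\sigma)\xi/L$. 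The last one comes from Lemma~\ref{estimate1}: either $\tau_k=1$, or $\beta^{-1}\tau_k$ was rejected.

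\textbf{Sufficient decrease $(\mathcal{H}1)$.} First combine the descent lemma \eqref{descentlm} with \eqref{newconditionforhatx}:
\[
\varphi(\hat x^k)\le\varphi(x^k)-\Bigl(c-\tfrac L2\Bigr)\|\hat x^k-x^k\|^2 .
\]
Next, the Armijo step gives $\varphi(x^{k+1})\le\varphi(\hat x^k)-\sigma\xi\tau_k\|d^k\|^2$. Since $\tau_k\le1$, this is at least $\sigma\xi\|x^{k+1}-\hat x^k\|^2$ of decrease. Write $u=\hat x^k-x^k$ and $v=x^{k+1}-\hat x^k$. The inequality $\|u+v\|^2\le 2\|u\|^2+2\|v\|^2$ then yields
\[
\varphi(x^{k+1})+a\|x^{k+1}-x^k\|^2\le\varphi(x^k), \qquad a=\tfrac12\min\{c-L/2,\ \sigma\xi\}>0 .
\]
This step is routine.

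\textbf{Relative error $(\mathcal{H}2)$.} This is the main obstacle. We need $\|\nabla\varphi(x^k)\|\le b\|x^{k+1}-x^k\|$, but the algorithm only controls $\nabla\varphi(\hat x^k)$. By Lipschitz continuity and the bound on $\|\nabla\varphi(\hat x^k)\|$,
\[
\|\nabla\varphi(x^k)\|\le L\|u\|+\|\nabla\varphi(\hat x^k)\|\le L\|u\|+\frac{\alpha}{\tau_k}\|v\| .
\]
Using the lower bound on $\tau_k$, it then suffices to show $\|u\|+\|v\|\le C\|u+v\|$. The danger is that $u$ and $v$ nearly cancel, i.e.\ the Newton/gradient step returns toward $x^k$.

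To rule this out, first test \eqref{newconditionforhatx} with Cauchy--Schwarz: it gives $c\|u\|\le\|\nabla\varphi(x^k)\|$. Then use the bound above to get
\[
(c-L)\|u\|\le\|\nabla\varphi(\hat x^k)\|\le\frac{\alpha}{\tau_k}\|v\|\le\frac{\alpha L}{2\beta(1-\sigma)\xi}\|v\| .
\]
The conditions in \eqref{constant} are exactly what make the ratio
\[
\theta:=\frac{\alpha L}{2\beta(1-\sigma)\xi(c-L)}
\]
less than $1$. Hence $\|u\|\le\theta\|v\|$ with $\theta<1$, and so $\|u+v\|\ge(1-\theta)\|v\|\ge\frac{1-\theta}{1+\theta}(\|u\|+\|v\|)$. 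Substituting back gives $(\mathcal{H}2)$ with an explicit $b$ depending on $L,\alpha,\beta,\sigma,\xi,c$. If the paper's constant arrangement differs slightly, e.g.\ using $\tau_k\ge\beta\bar\tau$, the same chain works after adjusting $\theta$.

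\textbf{Conclusion.} With $(\mathcal{H}1)$ and $(\mathcal{H}2)$ in hand, invoke Lemma~\ref{PLKlemma} at the accumulation point $\bar x$ where the PLK condition holds. This yields $x^k\to\bar x$ with $\nabla\varphi(\bar x)=0$. In the exponent case it yields the rates (i)--(iii).
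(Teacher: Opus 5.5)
Your proposal is correct to the same standard as the paper and follows its proof essentially step for step. $(\mathcal{H}1)$ comes from the descent-lemma estimate $\varphi(\hat x^k)\le\varphi(x^k)-(c-\tfrac L2)\|\hat x^k-x^k\|^2$ plus the Armijo decrease along $x^{k+1}-\hat x^k$, with your chained constant $a=\tfrac12\min\{c-L/2,\sigma\xi\}$ slightly better than the paper's; $(\mathcal{H}2)$ comes from $c\|\hat x^k-x^k\|\le\|\nabla\varphi(x^k)\|$, $\|\nabla\varphi(\hat x^k)\|\le\alpha\|d^k\|$, the stepsize bound and the same ratio $\theta<1$ from \eqref{constant}, your $\|u\|\le\theta\|v\|$ being equivalent to the paper's absorption of $\theta\|\nabla\varphi(x^k)\|$ on the left.

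One caveat, shared with the paper: when $\tau_k=1$ no trial step was rejected, so Lemma~\ref{estimate1} only gives $\tau_k\ge\min\{1,2\beta(1-\sigma)\xi/L\}$, and $1/\tau_k\le L/(2\beta(1-\sigma)\xi)$ holds there only if $L\ge 2\beta(1-\sigma)\xi$ (otherwise use $\max\{1,L/(2\beta(1-\sigma)\xi)\}$ in $\theta$ and strengthen \eqref{constant}).
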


\begin{proof} Note that Condition \eqref{constant} implies that $c>L$, and thus $c- L>0$.  Our goal is to apply Lemma \ref{PLKlemma} to establish convergence and derive the convergence rate for the sequence $\{x^k\}$. To this end, we need to verify that $\{x^k\}$ satisfies the properties $(\mathcal{H}1)$ and $(\mathcal{H}2)$. 
It follows from the construction of $\{x^k\}$ in Algorithm \ref{globalaNM} and Lemma \ref{existNM} that for any $k \in \N$, we have 
$$
\dotproduct{\nabla\varphi(\hat{x}^k),d^k}  \leq -\xi \norm{d^k}^2 \quad \text{for all }\; k \in \N,
$$
which implies that 
$$
\varphi(x^{k+1}) \leq  \varphi(\hat{x}^k)  + \sigma \tau_k \dotproduct{\nabla \varphi(\hat{x}^k),d^k} \leq \varphi(x^k) - \sigma \xi \tau_k \norm{d^k}^2
$$
for any $k \in \N$. Therefore, we have the estimates 
\begin{equation}\label{triangle1}
\norm{\hat{x}^k -x^{k+1}}^2 = \tau_k^2 \norm{d^k}^2 \leq   \tau_k \norm{d^k}^2 \leq \frac{1}{\sigma\xi}\left(\ph(x^k)-\ph(x^{k+1}) \right)   
\end{equation}
for all $k \in \N.$ Furthermore, since $\varphi$ is $\mathcal{C}^{1,1}_L$, we deduce from    Lemma \ref{estimate1} that
\begin{equation}\label{taukoffzero}
\beta^{-1}\tau_{k} > \frac{2(\sigma -1)\dotproduct{\nabla \varphi(\hat{x}^{k}),d^{k}}}{L\norm{d^{k}}^2} \geq \frac{2(1-\sigma)\xi}{L} \quad \text{for all }\; k \in \N,  
\end{equation}
and 
$$
\varphi(\hat{x}^k) \leq \varphi(x^k) + \dotproduct{\nabla\ph(x^k),\hat{x}^k-x^k} + \frac{L}{2}\norm{x^k -\hat{x}^k}^2 \quad \text{for all }\; k \in \N.
$$
Combining the latter with \eqref{newconditionforhatx}, we have
$$
\varphi(\hat{x}^k) \leq \varphi(x^k)  - \left(c - \frac{L}{2} \right)\norm{x^k -\hat{x}^k }^2 \quad \text{for all }\; k \in \N,
$$
which implies that 
\begin{equation}\label{triangle2}
\norm{x^k-\hat{x}^k}^2 \leq \frac{2}{2c-L} \left(\ph(x^k) -\ph(\hat{x}^{k}) \right) \leq  \frac{2}{2c-L} \left(\ph(x^k) -\ph(x^{k+1}) \right) \quad \text{for all }\; k \in \N. 
\end{equation}
Combining \eqref{triangle1} and \eqref{triangle2}, we deduce that 
$$
\norm{x^{k+1}-x^k}^2 \leq 2\left(\norm{x^{k+1}-\hat{x}^k}^2 + \norm{x^k -\hat{x}^k}^2  \right) \leq \frac{1}{a}\left(\ph(x^k) -\ph(x^{k+1}) \right) \quad \text{for all }\; k \in \N,
$$
where $a:= \left(\frac{2}{\sigma\xi} + \frac{4}{2c-L} \right)^{-1}>0,$ and thus $\{x^k\}$ satisfies property $(\mathcal{H}1)$ in \eqref{desigualdade1kNDH}. We next verify that $\{x^k\}$ satisfies property $(\mathcal{H}2)$ in \eqref{fran}. Indeed, for any $k \in \N$, we have 
\begin{align}\label{H2prove1}
\norm{\nabla\ph(x^k)} & \leq \norm{\nabla \ph(x^k)- \nabla \ph(\hat{x}^k)}+ \norm{\nabla\ph(\hat{x}^k)} \leq L\norm{x^k -\hat{x}^k} + \norm{\nabla\ph(\hat{x}^k)} 
\end{align}
Furthermore, it follows from \eqref{newconditionforhatx} and the Cauchy-Schwarz's inequality, we deduce that 
$$
-\norm{\nabla\ph(x^k)}. \norm{\hat{x}^k -x^k} \leq \dotproduct{\nabla \varphi(x^k), \hat{x}^k-x^k} \leq -c \norm{x^k-\hat{x}^k}^2,
$$
which implies that 
\begin{equation}\label{H2prove2}
\norm{x^k -\hat{x}^k} \leq \frac{1}{c}\norm{\nabla\ph(x^k)} \quad \text{for all }\; k \in \N. 
\end{equation}
Combining \eqref{H2prove1} and \eqref{H2prove2}, we have 
\begin{equation}\label{gradxxhat}
 \norm{\nabla\ph(x^k)} \leq \frac{c}{c-L}\norm{\nabla \ph(\hat{x}^k)} \quad \text{for all }\; k \in \N. 
\end{equation}
Moreover, since $\varphi$ is $\mathcal{C}^{1,1}_L$, we have 
$$
\norm{\nabla^2\varphi(x)w}\leq L\|w\| \quad \text{for all }\; w \in \R^n,
$$
which implies that 
\begin{equation}\label{H2prove3}
\norm{\nabla\varphi(\hat{x}^k)} \leq \text{\rm max} \left\{ \norm{d^k}, \norm{\nabla^2\varphi(\hat{x}^k)d^k} \right\} \leq \text{\rm max} \left\{ \norm{d^k}, L\norm{d^k} \right\} = \alpha \norm{d^k}
\end{equation}
for all $k \in \N.$ Combining \eqref{taukoffzero}, \eqref{H2prove2}, \eqref{gradxxhat} and \eqref{H2prove3}, we deduce that 
\begin{align*}
\norm{\nabla\ph(x^k)} \leq \frac{c \alpha}{c-L}\norm{d^k} &=\frac{c\alpha}{(c-L)\tau_k} \norm{x^{k+1} -  \hat{x}^k}   \\
& \leq \frac{c\alpha L}{2\beta(1-\sigma)\xi(c-L)}\norm{x^{k+1} -  \hat{x}^k} \\
& \leq \frac{c\alpha L}{2\beta(1-\sigma)\xi(c-L)}\norm{x^{k+1}-x^k} + \frac{c\alpha L}{2\beta(1-\sigma)\xi(c-L)} \norm{x^k -\hat{x}^k}\\
& \leq \frac{c\alpha L}{2\beta(1-\sigma)\xi(c-L)}\norm{x^{k+1}-x^k} + \frac{\alpha L}{2\beta(1-\sigma)\xi(c-L)} \norm{\nabla\varphi(x^k)}
\end{align*}
for all $k \in \N$, which implies that \eqref{fran} holds with 
$$
b:= \left( 1 -  \frac{\alpha L}{2\beta (1-\sigma)\xi (c-L)}\right)^{-1} \cdot \frac{c\alpha L}{2\beta(1-\sigma)\xi(c-L)} >0
$$
due to \eqref{constant}. Therefore, $\{x^k\}$ satisfies properties  $(\mathcal{H}1)$ and $(\mathcal{H}2)$. By applying Lemma \ref{PLKlemma}, the proof is complete. 
 
\end{proof}

\begin{Remark}[\bf on the conditions in Theorem \ref{converPLK}] \rm Conditions \eqref{newconditionforhatx} and \eqref{constant} provide criteria for choosing the approximate step $\hat{x}^k$ and controlling the initial parameter $\sigma$ in Algorithm \ref{globalaNM}. In practice, these requirements are not restrictive. Obviously, \eqref{newconditionforhatx} is satisfied when $\hat{x}^k = x^k$.  
In addition, if we choose $
\hat{x}^k = x^k - \lm \nabla \varphi(x^k)$, 
where $\lm>0$, then Algorithm \ref{globalaNM} reduces to the extragradient Newton method discussed in Remark \ref{choicestep}. In particular, the parameters $\lm>0$ and $\sigma>0$ can be chosen as follows:
$$
0< \lm < \left(\frac{\alpha L}{2\beta \xi} +L \right)^{-1} \quad \text{and }\;   0 < \sigma < 1 - \frac{\alpha L}{2\beta \xi (\lm^{-1}-L)}.
$$
In this case \eqref{newconditionforhatx} and \eqref{constant} are satisfied with $c = 1/\lm.$ 

\end{Remark}

\section{Local Superlinear and Quadratic Convergence Analysis}\label{sec:localconvergence}
In this section, we analyze the local fast convergence of our proposed method. In particular, we show that the convergence is superlinear or quadratic when the Hessian is positive definite at the accumulation point, which is a sufficient optimality condition for a stationary point and is weaker than the strong convexity assumption typically required in classical globalized Newton-type methods. To proceed, we recall the following proposition concerning the acceptance of the unit stepsize under semismoothness. This result follows from a careful examination of the proof of \cite[Theorem~3.3]{F1996} and was restated in \cite[Proposition 4.4]{kmptjogo} and  \cite[Proposition~2]{kmptmp}.  For convenience, we present it here in the special case where the function is twice continuously differentiable.

\begin{Proposition}[\bf acceptance of unit stepsize]\label{acceptancesm} Suppose that a function $\varphi:\R^n\to\R$ is $\mathcal{C}^{2}$-smooth around $\ox$ with $\nabla \varphi(\ox)=0$. Let a sequence $\{x^k\}$ converge to $\bar{x}$ with $x^k\ne\bar{x}$ as $k\in\N$, and let a sequence $\{d^k\}$ satisfy the condition 
$$
\|x^k+d^k-\bar{x}\|=o(\|x^k-\bar{x}\|).
$$ 
Suppose further that there exists $\kappa>0$ such that
$\langle\nabla\varphi(x^k),d^k\rangle\le\kappa^{-1}\|d^k\|^2$ whenever $k\in\N$ is sufficiently large. Then for any $\sigma\in (0,1/2)$ we have the estimate
$$
\varphi(x^k+d^k)\le\varphi(x^k)+\sigma\langle\nabla\varphi(x^k),d^k\rangle\;\mbox{ for all large }\;k\in \N.
$$
\end{Proposition}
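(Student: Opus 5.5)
Throughout, I read the descent hypothesis as $\langle\nabla\varphi(x^k),d^k\rangle\le-\kappa\|d^k\|^2$, which is the form in \cite[Theorem~3.3]{F1996} and the form that \eqref{boundeddirection} supplies in Algorithm~\ref{globalaNM}. As literally printed, with $+\kappa^{-1}\|d^k\|^2$, the inequality allows ascent directions, and the conclusion then fails. Nothing beyond the stated hypotheses (with this sign reading) is needed.

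The plan is a direct second-order Taylor comparison around $\bar x$. Set $e_k:=x^k-\bar x\neq0$ and $H:=\nabla^2\varphi(\bar x)$. The hypothesis $\|x^k+d^k-\bar x\|=o(\|e_k\|)$ means $d^k=-e_k+o(\|e_k\|)$. Hence $\|d^k\|=\|e_k\|(1+o(1))$, and all the $o$-terms below can be measured against $\|e_k\|$.

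\textbf{Step 1: expansions.} Since $\varphi$ is $\mathcal{C}^2$ near $\bar x$ with $\nabla\varphi(\bar x)=0$, for $y\to\bar x$ we have
\[
\varphi(y)=\varphi(\bar x)+\tfrac12\langle H(y-\bar x),y-\bar x\rangle+o(\|y-\bar x\|^2),
\qquad
\nabla\varphi(y)=H(y-\bar x)+o(\|y-\bar x\|).
\]
Apply the first expansion at $y=x^k$ and at $y=x^k+d^k$; the latter satisfies $\|x^k+d^k-\bar x\|=o(\|e_k\|)$. This gives
\[
\varphi(x^k+d^k)-\varphi(x^k)=-\tfrac12\langle He_k,e_k\rangle+o(\|e_k\|^2).
\]
Combining the gradient expansion with $d^k=-e_k+o(\|e_k\|)$ gives
\[
\langle\nabla\varphi(x^k),d^k\rangle=-\langle He_k,e_k\rangle+o(\|e_k\|^2).
\]

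\textbf{Step 2: uniform curvature along $e_k$.} This is the step I expect to be the only real obstacle, because $H$ is not assumed positive definite. The idea is to extract the needed curvature from the descent hypothesis. Inserting Step 1 into $\langle\nabla\varphi(x^k),d^k\rangle\le-\kappa\|d^k\|^2=-\kappa\|e_k\|^2(1+o(1))$ yields
\[
\langle He_k,e_k\rangle\ge\kappa\|e_k\|^2+o(\|e_k\|^2).
\]
Consequently $\langle He_k,e_k\rangle\ge\tfrac{\kappa}{2}\|e_k\|^2$ for all large $k$.

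\textbf{Step 3: conclusion.} Subtracting, we get
\[
\varphi(x^k+d^k)-\varphi(x^k)-\sigma\langle\nabla\varphi(x^k),d^k\rangle=-\bigl(\tfrac12-\sigma\bigr)\langle He_k,e_k\rangle+o(\|e_k\|^2).
\]
By Step 2 the right-hand side is at most $-\bigl(\tfrac12-\sigma\bigr)\tfrac{\kappa}{2}\|e_k\|^2+o(\|e_k\|^2)$. Because $\sigma<1/2$, this quantity is negative for all large $k$, which is the claimed Armijo inequality with unit stepsize. The restriction $\sigma\in(0,1/2)$ enters exactly here, and the assumption $x^k\neq\bar x$ is what allows division by $\|e_k\|^2$ when absorbing the $o$-terms.
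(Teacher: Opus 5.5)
Your proof is correct. Your reading of the hypothesis as $\langle\nabla\varphi(x^k),d^k\rangle\le-\kappa\|d^k\|^2$ is the intended one. The printed sign is a typo: with $\varphi(x)=-x^2/2$, $\ox=0$, $x^k=1/k$, $d^k=-x^k$, the literal hypotheses hold (with $\kappa=1$), yet $\varphi(x^k+d^k)=0>(\sigma-\tfrac12)/k^2$. Moreover, the paper itself invokes the proposition with the negative sign in the proof of Theorem~\ref{converHess}.

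The paper gives no proof of its own; it only refers to a careful reading of Facchinei's Theorem~3.3. Your argument is a self-contained $\mathcal{C}^2$ version of that standard argument.

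One simplification: Step~2 is not needed. Step~1 together with $\|d^k\|=\|e_k\|(1+o(1))$ already gives $\varphi(x^k+d^k)-\varphi(x^k)-\tfrac12\langle\nabla\varphi(x^k),d^k\rangle=o(\|d^k\|^2)$. Hence the Armijo gap equals $(\tfrac12-\sigma)\langle\nabla\varphi(x^k),d^k\rangle+o(\|d^k\|^2)\le-(\tfrac12-\sigma)\kappa\|d^k\|^2+o(\|d^k\|^2)<0$ for large $k$. This form never refers to $\nabla^2\varphi(\ox)$. It is also the form that carries over to the $\mathcal{C}^{1,1}$/semismooth setting of the cited sources, where the $o(\|d^k\|^2)$ estimate comes from semismoothness of $\nabla\varphi$ rather than from a Taylor expansion at $\ox$.
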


Next, we establish the local superlinear and quadratic convergence rates of the iterates generated by Algorithm~\ref{globalaNM} under the assumption that the Hessian is positive definite at the accumulation point. 

\begin{Theorem}[\bf convergence under the positive definiteness of Hessian at accumulation points]\label{converHess} In the setting of Theorem~\ref{globalconver}, assume that $\inf \varphi(x^k) >-\infty$ and  \eqref{approximates} holds. Let $\bar{x}$ be an accumulation point of the sequence $\{x^k\}$ in which the Hessian $\nabla^2 \varphi(\bar{x})$ is positive definite. Then the entire sequence $\{x^k\}$ converges to $\bar{x}$, which is a strong local minimizer of $\varphi$. Furthermore, if there exist $\eta>0$ and $k_0 \in \N$ such that 
\begin{equation}\label{approximates2}
\norm{\hat{x}^k-\ox} \leq \eta \norm{x^k-\ox} \quad \text{for all }\; k \geq k_0,
\end{equation}
then the convergence rate of $\{x^k\}$ is Q-superlinear. Moreover, the convergence rate is quadratic if, in addition, $\nabla^2 \varphi$ is locally Lipschitz continuous in a neighborhood of $\bar{x}$. 
\end{Theorem}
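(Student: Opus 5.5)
Our plan has three stages: first convergence of the whole sequence, then eventual acceptance of the pure Newton step with unit stepsize, and finally the rate.

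\textbf{Stage 1: convergence to a strong local minimizer.} By Theorem~\ref{globalconver}(iii), $\bar{x}$ is stationary. Together with positive definiteness of $\nabla^2\varphi(\bar{x})$, the second-order sufficient condition shows that $\bar{x}$ is a strong local minimizer. It is also an isolated stationary point: by Proposition~\ref{uniformPD}, $\nabla^2\varphi\succeq\kappa I$ on some ball $B_r(\bar{x})$, so $\nabla\varphi$ is strongly monotone there and $\bar{x}$ is the only zero of $\nabla\varphi$ in that ball. Since every accumulation point is stationary, $\bar{x}$ is an isolated accumulation point. The last part of Theorem~\ref{globalconver}(iii) then gives $x^k\to\bar{x}$, and \eqref{approximates} gives $\hat{x}^k\to\bar{x}$.

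\textbf{Stage 2: the Newton branch and the unit step are eventually used.} Take $\kappa$ close to $\alpha=\lambda_{\min}(\nabla^2\varphi(\bar{x}))$. If $\zeta\le\kappa$, then for large $k$ we have $\hat{x}^k\in B_r(\bar{x})$, so $\lambda_{\min}(\nabla^2\varphi(\hat{x}^k))\ge\zeta$ and $d^k$ is the Newton direction at $\hat{x}^k$. I expect a hidden assumption here: the Newton branch must be active near $\bar{x}$, i.e.\ $\zeta<\lambda_{\min}(\nabla^2\varphi(\bar{x}))$. I would state this explicitly. Otherwise only gradient steps are taken and no superlinear rate can be obtained. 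If $\hat{x}^k=\bar{x}$ the algorithm stops, so assume $\hat{x}^k\neq\bar{x}$. The standard Newton estimate gives
\[
\hat{x}^k+d^k-\bar{x}=\nabla^2\varphi(\hat{x}^k)^{-1}\bigl[\nabla\varphi(\bar{x})-\nabla\varphi(\hat{x}^k)-\nabla^2\varphi(\hat{x}^k)(\bar{x}-\hat{x}^k)\bigr],
\]
so $\|\hat{x}^k+d^k-\bar{x}\|\le\kappa^{-1}\,o(\|\hat{x}^k-\bar{x}\|)$. If $\nabla^2\varphi$ is locally Lipschitz with constant $L_H$, the bound becomes $\frac{L_H}{2\kappa}\|\hat{x}^k-\bar{x}\|^2$. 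Next apply Proposition~\ref{acceptancesm} to the sequences $\{\hat{x}^k\}$ and $\{d^k\}$. The descent bound $\langle\nabla\varphi(\hat{x}^k),d^k\rangle\le-\xi\|d^k\|^2$ from Lemma~\ref{existNM} is stronger than the sign condition the proposition needs. Since $\sigma<1/2$, the unit step $\tau=\beta^0=1$ passes the Armijo test for all large $k$, so $\tau_k=1$ and $x^{k+1}=\hat{x}^k+d^k$.

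\textbf{Stage 3: the rate, which is the main obstacle.} Combining Stage 2 with \eqref{approximates2},
\[
\|x^{k+1}-\bar{x}\|=o(\|\hat{x}^k-\bar{x}\|)=o(\eta\|x^k-\bar{x}\|)=o(\|x^k-\bar{x}\|),
\]
which is Q-superlinear convergence. In the Lipschitz case,
\[
\|x^{k+1}-\bar{x}\|\le\frac{L_H\eta^2}{2\kappa}\|x^k-\bar{x}\|^2,
\]
which is quadratic. The delicate bookkeeping is to handle the indices where $x^k=\bar{x}$ or $\hat{x}^k=\bar{x}$, where the algorithm has already terminated. One should also check that \eqref{approximates2} is used only to transfer the $o(\cdot)$ and $O(\cdot)^2$ bounds from $\hat{x}^k$ back to $x^k$.
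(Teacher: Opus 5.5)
Your proof is correct and follows the paper's argument step for step: isolatedness of $\bar x$ via uniform positive definiteness of $\nabla^2\varphi$ near $\bar x$ together with Theorem~\ref{globalconver}(iii), the Newton error estimate at $\hat x^k$, unit-step acceptance via Proposition~\ref{acceptancesm}, and transfer to $x^k$ through \eqref{approximates2}; your factor $\eta^2$ in the quadratic bound is the correct one, whereas the paper writes $\eta$. The hypothesis $\zeta<\lambda_{\rm min}(\nabla^2\varphi(\bar x))$ that you flag is genuinely missing from the statement, since the paper's proof silently assumes $-\nabla\varphi(\hat x^k)=\nabla^2\varphi(\hat x^k)d^k$ for all large $k$; without it the rate claims fail, as shown by $\varphi(x)=x^2/4$ with $\zeta=1$ and $\hat x^k=x^k$, which gives $\tau_k=1$ and $x^{k+1}=x^k/2$.
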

\begin{proof}
Suppose  that $\{x^k\}$ has  {an accumulation point} $\bar{x}$ in which    $\nabla^2\varphi(\bar{x})$ is positive-definite. By  Theorem \ref{globalconver}, we deduce that $\ox$ is a stationary point.  Proposition \ref{uniformPD} gives us positive numbers $\kappa$ and $\delta$ such that
\begin{equation}\label{uniformPDHess}
\langle \nabla^2\varphi(x)w,w\rangle\ge\kappa\|w\|^2\quad\text{for all }\; x\in B_\delta(\bar{x}),\;\mbox{ and }\;w\in\R^n.
\end{equation}
Therefore, $\varphi$ is strongly convex on $B_\delta(\ox)$, which implies that $\ox$ is the unique stationary point of $\varphi$ in $B_\delta(\ox)$ and
$$
\varphi(x) \geq \varphi(\ox) + \langle \nabla\varphi(\ox),x-\ox\rangle + \frac{\kappa}{2}\|x-\ox\|^2 = \varphi(\ox)+ \frac{\kappa}{2}\|x-\ox\|^2  \quad \text{for all }\; x \in B_\delta(\ox).
$$
The latter means that $\ox$ is a strong local minimizer of $\varphi.$ 
To verify the convergence of $\{x^k\}$ to $\ox$, we only need to show that $\ox$ is isolated. Indeed,  we   show that no other  accumulation point of $\{x^k\}$ exists in ${B}_\delta(\bar{x})$. Assuming the contrary, find $\tilde x\in {B}_\delta(\bar{x})$ such that $\Tilde x\ne\bar{x}$ and that $\tilde x$ is  an accumulation point  of $\{x^k\}$. Using again Theorem \ref{globalconver}, $\tilde{x}$ is a stationary point of $\varphi$, which is a contradiction. Hence, by Theorem \ref{globalconver}, $\{x^k\}$ must converge to $\ox$ as $k \to \infty.$ 

\medskip We next verify the superlinear convergence of $\{x^k\}$. Indeed, since $x^k \to \ox$ as $k\to \infty$ and \eqref{approximates} holds, it follows that $\hat{x}^k \to \ox$ as $k \to \infty$.  Next, we need to show that
\begin{equation}\label{dksuperlinear}
\norm{\hat{x}^k + d^k -\ox} =o\left(\norm{\hat{x}^k -\ox}\right).
\end{equation}
Indeed, for each $k \in \N$, we have 
$$
-\nabla\varphi(\hat{x}^k)=\nabla^2\varphi(\hat{x}^k)d^k = \nabla^2\varphi(\hat{x}^k)(\hat{x}^k+d^k-\bar{x})+\nabla^2\varphi(\hat{x}^k)(-\hat{x}^k+\bar{x}),
$$
which implies that 
\begin{align}\label{estimatedksuper}
\left\|\hat{x}^k+d^k-\bar{x}\right\| & = \left\| -\nabla^2\varphi(\hat{x}^k)^{-1}\left(\nabla\varphi(\hat{x}^k) - \nabla^2\varphi(\hat{x}^k)(\hat{x}^k-\bar{x})\right)\right\| \nonumber\\
    & \leq \left\|\nabla^2\varphi(\hat{x}^k)^{-1} \right\| \left\| \nabla\varphi(\hat{x}^k) - \nabla\varphi(\ox)- \nabla^2\varphi(\hat{x}^k)(\hat{x}^k- \bar{x})\right\| \nonumber\\
    & \leq \frac{1}{\kappa} \left\| \nabla\varphi(\hat{x}^k) - \nabla\varphi(\ox)- \nabla^2\varphi(\hat{x}^k)(\hat{x}^k- \bar{x})\right\|
\end{align}
for all sufficiently large $k \in \N$. The last inequality follows from the fact that $\hat{x}^k \to \ox$ and the condition \eqref{uniformPDHess} holds. Since $\varphi$ is $\mathcal{C}^2$, we must have 
\begin{equation}\label{C2}
\left\| \nabla\varphi(\hat{x}^k) - \nabla\varphi(\ox)- \nabla^2\varphi(\hat{x}^k)(\hat{x}^k- \bar{x})\right\| = o\left(\norm{\hat{x}^k -\ox}\right).
\end{equation}
Combining \eqref{estimatedksuper} and \eqref{C2}, we obtain \eqref{dksuperlinear}. Moreover, since $\hat{x}^k \to \ox$ and \eqref{uniformPDHess} holds, we deduce that $\dotproduct{ \nabla^2\varphi(\hat{x}^k)d^k,d^k} \geq \kappa \norm{d^k}^2$ for all sufficiently large $k\in \N$. Due to the construction of $\{\hat{x}^k\}$ in Algorithm \ref{globalaNM}, we have $\dotproduct{\nabla \varphi(\hat{x}^k),d^k} \leq -\kappa \norm{d^k}^2$ for all sufficiently large $k\in \N$. Applying Proposition \ref{acceptancesm}, we obtain the following 
$$
\varphi(\hat{x}^k+d^k)\le\varphi(\hat{x}^k)+\sigma\dotproduct{\nabla\varphi(\hat{x}^k),d^k}\;\mbox{ for all large }\;k,
$$
which means that   $\tau_k=1$ for all $k$ sufficiently large, and thus we deduce from \eqref{dksuperlinear} that
$$
\norm{x^{k+1} - \ox} = \norm{\hat{x}^k + d^k -\ox}  =o\left(\norm{\hat{x}^k-\ox}\right).
$$
Combining the latter with  \eqref{approximates2}, we obtain the superlinear convergence of $\{x^k\}.$  Suppose further that $\nabla^2\varphi$ is locally Lipschitz in a neighborhood of $\ox$. Then there exists $M>0$ such that  
\begin{equation}\label{C2,1}
\left\| \nabla\varphi(\hat{x}^k) - \nabla\varphi(\ox)- \nabla^2\varphi(\hat{x}^k)(\hat{x}^k- \bar{x})\right\| \leq  M\norm{\hat{x}^k -\ox}^2 \leq M\eta \norm{x^k -\ox}^2 
\end{equation}
for sufficiently large $k \in \N.$ Combining \eqref{estimatedksuper} and \eqref{C2,1}, we obtain the quadratic convergence of $\{x^k\},$ which completes the  proof.
\end{proof}

\begin{Remark}[\bf on the condition in Theorem \ref{converHess}] \rm Condition~\eqref{approximates2} is also satisfied by both the hybrid gradient--Newton method and the proposed extragradient Newton method. Indeed, when $\hat{x}^k=x^k$, Algorithm~\ref{globalaNM} reduces to the hybrid gradient--Newton method, and condition~\eqref{approximates2} holds trivially. On the other hand, when $
\hat{x}^k=x^k-\lambda\nabla\varphi(x^k)$, where $\lambda>0$, 
Algorithm~\ref{globalaNM} becomes the proposed extragradient Newton method. In this case, condition~\eqref{approximates2} is satisfied for all sufficiently large $k$ due to the local Lipschitz continuity of $\nabla\varphi$ around $\bar{x}$. 
\end{Remark}

\section{Convergence Analysis under  Quasiconvexity}\label{sec:quasiconvex}
This section provides an additional convergence analysis of our proposed method under generalized convexity, thereby significantly broadening the class of problems to which our Newton-type method can be applied.  Because convexity does not cover many models arising in science, economics, and engineering, several extensions have been proposed in the literature \cite{Crouzeix98,d49,m65}, including quasiconvex and pseudoconvex functions, which retain many of the desirable properties of convex functions. In particular, quasiconvex functions ensure that all sublevel sets are convex, whereas pseudoconvex functions guarantee that every critical point is a global minimizer. We now recall these concepts in more detail.   Let $\varphi: \R^n \to  {\R}$ be a function, and a nonempty convex set $\Omega \subset \R^n$. The function $\varphi$ is called \textit{quasiconvex} on $\Omega$ if
$$
\varphi((1-\lambda)x + \lambda y) \leq \max\{\varphi(x), \varphi(y)\},
$$
for all $x, y \in \Omega$ and all $\lambda \in [0,1]$. The function $\varphi$ is called \textit{pseudoconvex} on $\Omega$ if it is differentiable and satisfies
$$
x,y \in \Omega, \quad \varphi(x) > \varphi(y) \Longrightarrow \langle \nabla \varphi(x), y - x \rangle < 0.
$$
From these definitions, it is clear that every differentiable pseudoconvex function is also quasiconvex. More properties and characterizations of these classes of functions can be found in \cite{kp18,kp20}.    
Next, we revisit the class of \textit{strongly quasiconvex functions}, introduced by Boris Polyak in his seminal 1966 paper \cite{polyak66}. More precisely, a function $\varphi$ is called \textit{strongly quasiconvex} with modulus $\alpha > 0$ on $\Omega$ if
\begin{equation}
\varphi(\lambda x + (1-\lambda)y) \leq \max\{\varphi(x), \varphi(y)\} - \alpha \lambda(1-\lambda)\|x - y\|^2, 
\end{equation}
for all $x, y \in \Omega$ and all $\lambda \in [0,1]$.  It follows from \cite[Proposition~15]{l22} that every strongly quasiconvex function is pseudoconvex, and therefore also quasiconvex. Consequently, strongly quasiconvex functions inherit all the properties possessed by pseudoconvex and quasiconvex functions. 
This notion has been extensively studied from theoretical and practical perspectives, including first-order characterizations, dynamical systems exhibiting this property, and applications in numerical methods such as gradient descent, heavy ball, and inertial algorithms, as highlighted in recent works \cite{glm25,l22,l25}. In this section, we focus on how these generalized notions of convexity influence the convergence and convergence rates of the Newton-type methods proposed in this paper. To proceed, we first present a second-order necessary condition for  strongly quasiconvex functions. 

\begin{Proposition}[\bf second-order necessary condition for strong quasiconvexity]\label{2ndness} Let $\varphi:\R^n\to \R$ be $\mathcal{C}^2$-smooth on an open convex set $\Omega \subset \R^n.$ If $\varphi$ is strongly quasiconvex with modulus $\alpha>0$, then we have 
\begin{equation}\label{2ndquasi}
x \in \Omega, w \in \R^n, \langle \nabla\varphi(x),w\rangle =0 \Longrightarrow \langle \nabla^2 \varphi(x)w,w\rangle \geq 2\alpha\|w\|^2.
\end{equation}
\end{Proposition}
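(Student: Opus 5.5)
The plan is to apply the definition of strong quasiconvexity along a symmetric segment centered at $x$ in the direction $w$, and then compare with the second-order Taylor expansion of $\varphi$ at $x$. The first-order terms cancel because $\langle\nabla\varphi(x),w\rangle=0$.

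Fix $x\in\Omega$ and $w\in\R^n$ with $\langle \nabla\varphi(x),w\rangle=0$. The case $w=0$ is trivial, so assume $w\neq 0$. Since $\Omega$ is open, there is $t_0>0$ such that $x\pm tw\in\Omega$ for all $t\in(0,t_0)$. I will apply the defining inequality of strong quasiconvexity with the points $x+tw$, $x-tw$ and $\lambda=1/2$. Their midpoint is $x$ and $\|(x+tw)-(x-tw)\|^2=4t^2\|w\|^2$. The definition then gives
$$
\varphi(x)\le \max\{\varphi(x+tw),\varphi(x-tw)\}-\alpha\cdot\tfrac14\cdot 4t^2\|w\|^2=\max\{\varphi(x+tw),\varphi(x-tw)\}-\alpha t^2\|w\|^2 .
$$

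Next, since $\varphi$ is $\mathcal{C}^2$ around $x$, Taylor's theorem gives
$$
\varphi(x\pm tw)=\varphi(x)\pm t\langle\nabla\varphi(x),w\rangle+\tfrac{t^2}{2}\langle\nabla^2\varphi(x)w,w\rangle+o(t^2).
$$
By the orthogonality assumption the linear term vanishes for both signs. Hence both values, and therefore their maximum, equal $\varphi(x)+\tfrac{t^2}{2}\langle\nabla^2\varphi(x)w,w\rangle+o(t^2)$.

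Substituting this into the previous inequality yields
$$
0\le \tfrac{t^2}{2}\langle\nabla^2\varphi(x)w,w\rangle-\alpha t^2\|w\|^2+o(t^2).
$$
Dividing by $t^2$ and letting $t\downarrow 0$ gives $\langle\nabla^2\varphi(x)w,w\rangle\ge 2\alpha\|w\|^2$, which is \eqref{2ndquasi}.

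The only delicate point is handling the maximum in the definition. This is why I use the symmetric pair $x\pm tw$: both points have the same expansion up to $o(t^2)$ exactly because the first-order term is killed by $\langle \nabla\varphi(x),w\rangle=0$. With an asymmetric choice, the linear term would survive in one of the two values and the argument would break down.
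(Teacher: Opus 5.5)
Your proposal is correct and follows essentially the same route as the paper. Both apply strong quasiconvexity with $\lambda=1/2$ to the symmetric pair $x\pm tw$ and compare the result with the second-order Taylor expansion, where the linear terms cancel because $\langle\nabla\varphi(x),w\rangle=0$. The paper phrases this with a sequence $\tau_k\downarrow 0$ rather than a continuous parameter $t$, which makes no difference.
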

\begin{proof} Let $x \in \Omega$, $w \in \R^n\setminus \{0\}$ be such that $\langle \nabla\varphi(x),w\rangle =0$. Choosing any sequence $\{\tau_k\} \subset (0,\infty)$ such that $\tau_k \to 0$, we consider the sequences $\{x_k\}, \{y_k\}$ as follows:
$$
x_k:= x + \tau_k w, \quad y_k:= x -\tau_kw, \quad k \in \N. 
$$
Due to the strong quasiconvexity of $\varphi$ with modulus $\alpha$ on $\Omega$, we have 
$$
\varphi(x) = \varphi\left(\frac{1}{2} x_k + \frac{1}{2}y_k \right) \leq \max\{\varphi(x_k),\varphi(y_k)\} - \frac{\alpha}{4}\|x_k-y_k\|^2 \quad \text{for all sufficiently large } k,
$$
which implies that 
\begin{equation}\label{max>}
 \max\{\varphi(x_k)-\varphi(x),\varphi(y_k)-\varphi(x)\}  \geq  \alpha \tau_k^2 \|w\|^2\quad \text{for all sufficiently large } k. 
\end{equation}
By Taylor expansion, we have 
$$
\lim_{k\to \infty} \frac{\varphi(x_k)-\varphi(x)-\tau_k\langle\nabla \varphi(x),w\rangle -\frac{1}{2}\tau_k^2\langle \nabla^2\varphi(x)w,w\rangle}{\tau_k^2\|w\|^2}=0. 
$$
$$
\lim_{k\to \infty} \frac{\varphi(y_k)-\varphi(x)+\tau_k\langle\nabla \varphi(x),w\rangle -\frac{1}{2}\tau_k^2\langle \nabla^2\varphi(x)w,w\rangle}{\tau_k^2\|w\|^2}=0. 
$$
Since $\langle \nabla\varphi(x),w\rangle =0$, we deduce from the latter that 
\begin{equation}\label{maxxkyk}
\lim_{k\to\infty} \frac{\varphi(x_k)-\varphi(x)}{\tau_k^2}=\lim_{k\to\infty} \frac{\varphi(y_k)-\varphi(x)}{\tau_k^2} = \frac{1}{2}\langle \nabla^2\varphi(x)w,w\rangle. 
\end{equation}
Combining \eqref{max>} and \eqref{maxxkyk}, we obtain \eqref{2ndquasi}. The proof is complete. 
    
\end{proof}

We now study the convergence of Algorithm~\ref{globalaNM} under the assumption that the objective function is strongly quasiconvex. In this setting, the existence of a solution is guaranteed, and the generated iterative sequence converges globally to the unique solution, which satisfies the global quadratic growth condition.

\begin{Theorem}[\bf convergence under strong quasiconvexity]  \label{converquasi}
Let $\varphi \colon \R^n \to \R$ be a $\mathcal{C}^2$-smooth function that is strongly quasiconvex with modulus $\alpha>0$. Then Algorithm~{\rm\ref{globalaNM}} either terminates after finitely many iterations or generates a sequence $\{x^k\}$ such that the sequence  of function values $\{\varphi(x^k)\}$ is monotonically decreasing. Then we have the following assertions:
\begin{itemize}
    \item[\bf (i)] If the sequence $\{\hat{x}^k\}$ in Algorithm~{\rm\ref{globalaNM}} satisfies  \eqref{approximates}, 
then the sequence $\{x^k\}$ converges to $\ox$, which is the unique global minimizer of $\varphi$ satisfying the global growth condition
\begin{equation}\label{growth}
\varphi(x) \geq \varphi(\ox) + \frac{\alpha}{4}\|x-\ox\|^2 \quad \text{for all }\; x \in \R^n.
\end{equation}
\item[\bf (ii)]  If there exist constants $\eta>0$ and $k_0 \in \N$ such that
$$
\norm{\hat{x}^k - \ox} \leq \eta \norm{x^k - \ox} \quad \text{for all } k \geq k_0,
$$
then the sequence $\{x^k\}$ converges superlinearly to $\ox$ as $k \to \infty$. Furthermore, if $\nabla^2 \varphi$ is locally Lipschitz continuous in a neighborhood of $\ox$, the convergence of $\{x^k\}$ is quadratic.
\end{itemize}

\end{Theorem}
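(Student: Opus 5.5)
We first establish the structural facts about $\varphi$ that follow from strong quasiconvexity, and then apply the general results of Sections~\ref{sec:general} and~\ref{sec:localconvergence}.

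\textbf{Step 1: existence and uniqueness of a minimizer, and the growth condition.} A strongly quasiconvex function with modulus $\alpha$ that is continuous on $\R^n$ is coercive. Indeed, fix $y$ and take $x$ with $\|x-y\|$ large, and use the midpoint inequality
$$\varphi\big(\tfrac{x+y}{2}\big)\le \max\{\varphi(x),\varphi(y)\}-\tfrac{\alpha}{4}\|x-y\|^2.$$
Iterating this along dyadic points, or using the known results of \cite{l22}, one shows that $\varphi(x)\to\infty$ as $\|x\|\to\infty$. Hence all sublevel sets are compact and a global minimizer $\ox$ exists. Strong quasiconvexity forces uniqueness: if $\ox_1\ne\ox_2$ were both minimizers, their midpoint would have a strictly smaller value.

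For the growth condition \eqref{growth}, apply the definition with $y=\ox$ and a point $x$, using $\varphi(\lambda x+(1-\lambda)\ox)\ge\varphi(\ox)$. This gives
$$\varphi(\ox)\le \max\{\varphi(x),\varphi(\ox)\}-\alpha\lambda(1-\lambda)\|x-\ox\|^2.$$
For $x\ne\ox$ the maximum equals $\varphi(x)$, so taking $\lambda=1/2$ yields $\varphi(x)\ge\varphi(\ox)+\frac{\alpha}{4}\|x-\ox\|^2$.

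Since strongly quasiconvex functions are pseudoconvex \cite[Proposition~15]{l22}, every stationary point is a global minimizer. Therefore $\ox$ is the unique stationary point of $\varphi$.

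\textbf{Step 2: convergence in (i).} By Theorem~\ref{globalconver}, the values $\varphi(x^k)$ are nonincreasing. Hence $\{x^k\}$ stays in the compact sublevel set $\{\varphi\le\varphi(x^0)\}$, so it is bounded and has accumulation points. Under \eqref{approximates}, Theorem~\ref{globalconver}(iii) shows that every accumulation point is stationary, and so equals $\ox$. A bounded sequence whose only accumulation point is $\ox$ converges to $\ox$; alternatively, $\ox$ is trivially isolated and the last part of Theorem~\ref{globalconver}(iii) applies. Note that $\inf\varphi(x^k)\ge\varphi(\ox)>-\infty$ holds automatically.

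\textbf{Step 3: rates in (ii).} The goal is to invoke Theorem~\ref{converHess}, which requires $\nabla^2\varphi(\ox)$ to be positive definite. Since $\nabla\varphi(\ox)=0$, every $w$ satisfies $\langle\nabla\varphi(\ox),w\rangle=0$. Proposition~\ref{2ndness}, with $\Omega=\R^n$, then gives
$$\langle\nabla^2\varphi(\ox)w,w\rangle\ge2\alpha\|w\|^2 \quad\text{for all } w\in\R^n,$$
so the Hessian is positive definite. The hypothesis of (ii) is exactly \eqref{approximates2}, so Theorem~\ref{converHess} delivers superlinear convergence, and quadratic convergence when $\nabla^2\varphi$ is locally Lipschitz.

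One subtlety is that Theorem~\ref{converHess} also assumes \eqref{approximates}. If (ii) is meant to stand alone, I would instead rerun the local part of the proof of Theorem~\ref{converHess} directly, since it needs only $x^k\to\ox$ together with \eqref{approximates2}. The convergence $x^k\to\ox$ then gives $\hat x^k\to\ox$ through \eqref{approximates2}. Otherwise I would read (ii) as assuming (i) as well.

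\textbf{Main obstacle.} The delicate step is the coercivity argument in Step~1 from strong quasiconvexity alone. My first attempt would be the following. Fix $y$ and set $m=\min_{\|z-y\|\le1}\varphi(z)$. For $\|x-y\|=R\ge1$, let $z=y+\frac{x-y}{R}$. Apply the strong quasiconvexity inequality to the pair $x,y$ with $\lambda=1/R$, which gives
$$\varphi(z)\le\max\{\varphi(x),\varphi(y)\}-\alpha\tfrac1R\big(1-\tfrac1R\big)R^2.$$
Hence $\max\{\varphi(x),\varphi(y)\}\ge m+\alpha(R-1)$, and this lower bound tends to infinity with $R$. For large $R$ it exceeds $\varphi(y)$, so the maximum must be $\varphi(x)$, which proves coercivity.
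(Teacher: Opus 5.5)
Your route is essentially the paper's. Coercivity gives boundedness, Theorem~\ref{globalconver}(iii) gives stationarity of accumulation points, and Proposition~\ref{2ndness} together with Theorem~\ref{converHess} gives the rates. The differences are minor. You prove coercivity (the $\lambda=1/R$ argument is correct), uniqueness and \eqref{growth} directly from the definition where the paper cites the literature. You also get convergence from $\ox$ being the only stationary point rather than from isolatedness. All of that is fine.

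The problem is in Step~3, and the paper's own appeal to Theorem~\ref{converHess} has the same hole. You remark that the local argument ``needs only $x^k\to\ox$ together with \eqref{approximates2}'', but that is not so. It also uses $-\nabla\varphi(\hat x^k)=\nabla^2\varphi(\hat x^k)d^k$ for all large $k$, and Algorithm~\ref{globalaNM} takes the Newton direction only when $\lambda_{\rm min}(\nabla^2\varphi(\hat x^k))\ge\zeta$. Positive definiteness of $\nabla^2\varphi(\ox)$ does not ensure this when $\zeta$ is large. For instance, $\varphi(x)=x^2$ on $\R$ is strongly quasiconvex with $\alpha=1$ in the paper's convention. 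Take $\hat x^k=x^k$, $\zeta=3$, $\sigma=0.1$, $\beta=0.9$. Then the algorithm always uses $d^k=-2x^k$, and the Armijo test $(1-2\tau)^2\le 1-0.4\tau$ holds iff $\tau\le 0.9$. So $\tau_k=0.9$ and $x^{k+1}=-0.8\,x^k$, which is only linear convergence although every hypothesis of (i) and (ii) holds. You need an extra assumption such as $\zeta<2\alpha$. Then Proposition~\ref{2ndness} gives $\lambda_{\rm min}(\nabla^2\varphi(\ox))\ge 2\alpha>\zeta$. Continuity and $\hat x^k\to\ox$ then make the Newton branch active for all large $k$, and the rest of the proof of Theorem~\ref{converHess} applies.

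A smaller point concerns reading (ii) without \eqref{approximates}. Your fallback presupposes $x^k\to\ox$, which Step~2 derived only under \eqref{approximates}. In this coercive setting you can get it anyway:
\begin{itemize}
\item $\hat x^k$ lies in the compact set $\{\varphi\le\varphi(x^0)\}$, and $\|d^k\|\le\xi^{-1}\|\nabla\varphi(\hat x^k)\|$ is bounded.
\item Hence all trial points lie in a ball on which $\nabla\varphi$ is Lipschitz.
\item Lemma~\ref{estimate1} then bounds $\tau_k$ away from $0$ exactly as in Theorem~\ref{globalconver}(ii).
\item This gives $d^k\to0$ and $\nabla\varphi(\hat x^k)\to0$, so $\hat x^k\to\ox$ (the unique stationary point), and $x^{k+1}=\hat x^k+\tau_kd^k\to\ox$.
\end{itemize}
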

\begin{proof} Let $x^0 \in \R^n$ be the starting point of Algorithm~\ref{globalaNM}, and define the level set
$$
\Omega := \{x \in \R^n \mid \varphi(x) \leq \varphi(x^0)\}.
$$
By Theorem~\ref{globalconver}, Algorithm~{\rm\ref{globalaNM}} either terminates after finitely many iterations or generates a sequence $\{x^k\}$ such that the function values $\{\varphi(x^k)\}$ form a monotonically decreasing sequence. This implies that $
\{x^k\} \subset \Omega$ for all   $k \in \N.$ 
Moreover, it follows from \cite[Theorem 1]{l22} that $\varphi$ is coercive, so $\Omega$ is bounded. Consequently, the sequence $\{x^k\}$ is bounded and therefore possesses at least one accumulation point.   
Applying Theorem~\ref{globalconver} again, any accumulation point $\ox$ is a stationary point of $\varphi$. Then, by the strong quasiconvexity of $\varphi$ and Proposition~\ref{2ndness}, the Hessian $\nabla^2 \varphi(\ox)$ is positive definite, and $\ox$ is an isolated accumulation point.  Thus, the sequence $\{x^k\}$ must converge to $\ox$. Moreover, the strong quasiconvexity of $\varphi$ implies that $\varphi$ is pseudoconvex, so the stationary point $\ox$ is a global minimizer by \cite[Theorem 3.2.5]{AlbertoLaura09}. The uniqueness of $\ox$ follows from \cite[Corollary~3]{l22}, and the global growth condition \eqref{growth} is established by \cite[Corollary~9]{hl25}, which clarifies the assertion {\bf (i)}. We next verify the assertions {\bf (ii)}. Indeed,  Theorem~\ref{converHess} guarantees that the sequence $\{x^k\}$ converges superlinearly to $\ox$, with the convergence rate being quadratic if $\nabla^2 \varphi$ is locally Lipschitz in a neighborhood of $\ox$.  The proof is complete.

\end{proof}

\section{Numerical Experiments}\label{sec:num}
    In this section we test the extragradient Newton method method against the hybrid gradient-Newton method, along with the classical first order methods of gradient descent, heavyball and Nesterov's accelerated method. We do not tune the methods to each individual example, and instead use the parameter free versions of each individual method utilizing backtracking line searches. That is, in the updates for gradient descent 
    \begin{equation} \label{eqn: gradient descent step}
        x^{k+1}  = x^k + \tau_k d^k, \quad d^k = -\nabla \varphi(x^k), 
    \end{equation}
    heavyball 
    \begin{equation} \label{eqn: heavy ball step}
        x^{k+1} = x^k + \alpha_k (x^k - x^{k-1}) + \tau_k d^k, \quad  d^k = - \nabla \varphi (x^k)
    \end{equation}
    and Nesterov's accelerated method 
    \begin{equation} \label{eqn: nesterov step}
        x^{k+1} = x^k + \alpha_k (x^k - x^{k-1}) + \tau_kd^k, \quad d^k =-\nabla \varphi (x^k + \alpha_k (x^k - x^{k-1}) ) 
    \end{equation}
we compute $\tau_k:= \text{\rm argmax} \left\{\tau|\; \varphi(\hat{x}^k +\tau d^k) \leq  \varphi(x^k) + \sigma \tau \dotproduct{\nabla\varphi(x^k),d^k}   , \tau =\beta^{i}, i \in \N \right\}$ for parameters $\sigma = 1/2$ and $\beta = 1/2$. These values are sufficient to obtain convergence in the convex case \cite{Nesterov-W}. For Nesterov's accelerated method we choose $\alpha_k = \frac{k-1}{k+2}$ as standard \cite{Nesterov-W}, and for heavyball we choose a constant $\alpha_k = 0.3$. The linesearch in the Newton steps does not require such a large value for $\sigma$ and thus we use $\sigma = 0.05$ and $\beta = 1/2$. In the extragradient Newton method we choose $\hat{x}^k$ as a step of gradient descent with the same parameters as above. In Newton-type algorithms, we use $\zeta = 10^{-6}$. All examples are solved until $\|\nabla \varphi(x^k) \| \leq 10^{-4}$. 

These examples are chosen to demonstrate realistic problem regimes in which the extragradient Newton method offers improvement. There is no claim that the extragradient method is universally superior to standard hybrid gradient-Newton, however this does demonstrate that the framework of Algorithm \ref{globalaNM} allows one to adapt hybrid gradient-Newton to particular problem types for increased performance.

All examples are run on Matlab 2025a on a 3.8 GHz 8-Core Intel Core i7 with 32 GB 2667 MHz DDR4 memory. A seed of 47 is used throughout the examples.

\begin{Example} \rm \label{example: quasiconvex 1D} We consider the simple one dimensional quasiconvex objective function \cite{glm25}
\begin{equation*}
    \varphi(x) = x^2 + 3\sin(x)^2.
\end{equation*}
It has a single global minimum at 0. A comparison of the iterations for gradient descent, hybrid gradient-Newton and extragradient Newton method with an initial value of 6 is given in Figure \ref{fig: quasiconvex 1D objective plot}. We can see the benefit of the extragradient Newton method, as a single iteration of is equivalent to two iterations of hybrid gradient-Newton, and does not overshoot the minimum.  

Table \ref{table: quasiconvex 1D} shows the performance of the various methods. The runtime was averaged over 1000 runs, to account for timing fluctuations due to the fast run times. The extragradient Newton method outperforms all both iterations and runtime, halving the number of iterations of the hybrid gradient-Newton.
\begin{figure}[H]
    \centering
    \includegraphics[width=0.5\linewidth]{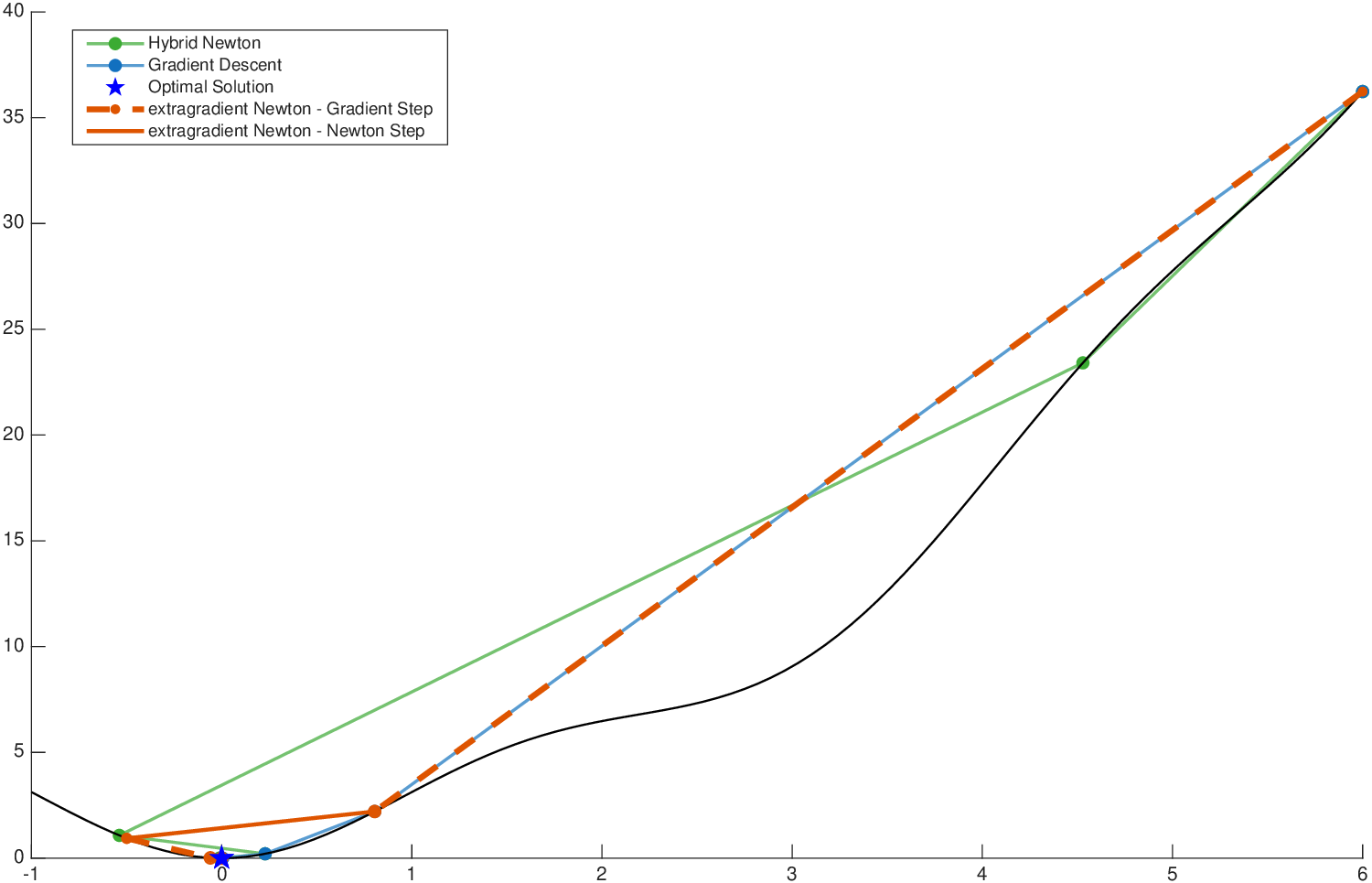}
    \caption{Illustration of the iterations the algorithms applied to Example \ref{example: quasiconvex 1D}.}
    \label{fig: quasiconvex 1D objective plot}
\end{figure}

\begin{table}[H]
\centering
\begingroup \scriptsize %
\begin {tabular}{c|ccccc}%
\toprule &gradient method&heavyball method&\shortstack {Nesterov\\ accelerated method}&\shortstack {extragradient \\ Newton method}&\shortstack {hybrid\\gradient-Newton}\\\midrule %
iterations&\pgfutilensuremath {5}&\pgfutilensuremath {17}&\pgfutilensuremath {7}&\pgfutilensuremath {3}&\pgfutilensuremath {8}\\%
time (s)&\pgfutilensuremath {1.34\cdot 10^{-3}}&\pgfutilensuremath {3.00\cdot 10^{-3}}&\pgfutilensuremath {1.33\cdot 10^{-3}}&\pgfutilensuremath {9.46\cdot 10^{-4}}&\pgfutilensuremath {1.55\cdot 10^{-3}}\\%
\end {tabular}%
\endgroup %

\caption{Performance comparison for Example \ref{example: quasiconvex 1D}. Time was averaged over 1000 runs. }
\label{table: quasiconvex 1D}
\end{table}

\end{Example}

\begin{Example} \rm \label{example: quasiconvex}

Consider the quasiconvex programming
$$
\min \quad \varphi(x) = \log \left( \|x\|^2_A +1 \right) + \frac{\mu}{2}\|x\|_A^2  \quad \text{subject to }\;  x \in \R^N,
$$
where the  {\em energy norm} $\|\cdot\|_A$ is given by
$$
\|x\|_A:= \langle Ax,x\rangle \quad \text{for all }\; x \in \R^N, 
$$
with a positive definite  matrix $A \in \R^{N \times N}$ randomly generated with eigenvalues linearly spaced in $[1,100]$ to create an ill-conditioned problem. 
The cost function $\varphi$ is nonconvex, but it is strongly quasiconvex, and thus the global convergence  of Algorithm \ref{globalaNM} is guaranteed due to Theorem \ref{converquasi}. 
\begin{proof} We can decompose $\varphi(x)$ into two parts, \(\varphi(x) = \varphi_1(x)+\varphi_2(x)\), with $\varphi_1(x) = \log \left( \|x\|^2_A +1 \right)$ and $\varphi_2(x) = \frac{\mu}{2}\|x\|_A^2$. Now $\varphi_2(x)$ is strongly convex with modulus $\gamma = \mu \lambda_{\text{min}}(A)$ and thus strongly quasiconvex with the same modulus. Moreover, $\varphi_1$ is quasiconvex because  it is the composition of a nondecreasing and a quasiconvex function \cite{GP1971}. Take $x,y \in \R^N$ and $z\coloneqq(1-\lambda)x + \lambda y$ for $\lambda \in [0,1]$.  Without loss of generality suppose $\varphi_2(x) \leq \varphi_2(y)$. This implies that
$$
\log \left( \|x\|^2_A +1 \right) \leq \log \left( \|y\|^2_A +1 \right),
$$
and thus $\varphi_1(x) \leq \varphi_1(y)$. Combining the latter with the quasiconvexity of $\varphi_1$, we deduce that $\varphi_1(z) \leq \varphi_1(y)$. Therefore, 
\begin{align*}
    \varphi(z) &= \varphi_1(z) + \varphi_2(z) \\
    &\leq \varphi_1(y) + \max \{ \varphi_2(x), \varphi_2(y) \} - \frac{\gamma}{2} \lambda(1-\lambda) \|y-x\|^2 \\
    &\leq \varphi(y) - \frac{\gamma}{2} \lambda(1-\lambda) \|y-x\|^2,
\end{align*}
which implies that $\varphi$ is strongly quasiconvex. 
\end{proof}

There is a global minimizer at $x=0$. For the experiment, we choose $\mu = 10^{-4}$ and initial values are randomly chosen with entries drawn uniformly distributed from $[0,10]$. We solve the problem for various  of $N$ with results collected in Table \ref{table: quasiconvex}. Convergence behavior for $N=1024$ can be seen in Figure \ref{fig: quasiconvex 1024 convergence}.

The Newton methods vastly outperform the first-order methods due to the poor conditioning of the problem. The extragradient Newton method method is dominant in runtime and iteration count throughout all scales. It has superior iterations over the hybrid gradient-Newton method due primarily to the extra gradient step assisting in more rapidly moving the near the minimizer from the relatively far initial value. 

\begin{table}[h]
\centering
\begingroup \scriptsize %
\begin {tabular}{c|cccccccccc}%
\toprule & \multicolumn {2}{c}{gradient method} & \multicolumn {2}{c}{heavyball method} &\multicolumn {2}{c}{\shortstack {Nesterov \\ accelerated method}} & \multicolumn {2}{c}{ \shortstack {extragradient \\ Newton method}} & \multicolumn {2}{c}{\shortstack {hybrid \\ gradient-Newton}}\\N&iterations&time&iterations&time&iterations&time&iterations&time&iterations&time\\\midrule %
\pgfutilensuremath {2}&\pgfutilensuremath {355}&\pgfutilensuremath {0.063}&\pgfutilensuremath {110}&\pgfutilensuremath {0.018}&\pgfutilensuremath {225}&\pgfutilensuremath {0.038}&\pgfutilensuremath {24}&\pgfutilensuremath {0.008}&\pgfutilensuremath {75}&\pgfutilensuremath {0.016}\\%
\pgfutilensuremath {4}&\pgfutilensuremath {376}&\pgfutilensuremath {0.064}&\pgfutilensuremath {165}&\pgfutilensuremath {0.027}&\pgfutilensuremath {334}&\pgfutilensuremath {0.055}&\pgfutilensuremath {24}&\pgfutilensuremath {0.005}&\pgfutilensuremath {45}&\pgfutilensuremath {0.008}\\%
\pgfutilensuremath {8}&\pgfutilensuremath {427}&\pgfutilensuremath {0.071}&\pgfutilensuremath {150}&\pgfutilensuremath {0.025}&\pgfutilensuremath {433}&\pgfutilensuremath {0.081}&\pgfutilensuremath {42}&\pgfutilensuremath {0.010}&\pgfutilensuremath {101}&\pgfutilensuremath {0.018}\\%
\pgfutilensuremath {16}&\pgfutilensuremath {450}&\pgfutilensuremath {0.074}&\pgfutilensuremath {226}&\pgfutilensuremath {0.037}&\pgfutilensuremath {1{,}013}&\pgfutilensuremath {0.168}&\pgfutilensuremath {43}&\pgfutilensuremath {0.010}&\pgfutilensuremath {88}&\pgfutilensuremath {0.016}\\%
\pgfutilensuremath {32}&\pgfutilensuremath {462}&\pgfutilensuremath {0.077}&\pgfutilensuremath {239}&\pgfutilensuremath {0.040}&\pgfutilensuremath {1{,}100}&\pgfutilensuremath {0.185}&\pgfutilensuremath {27}&\pgfutilensuremath {0.006}&\pgfutilensuremath {60}&\pgfutilensuremath {0.011}\\%
\pgfutilensuremath {64}&\pgfutilensuremath {563}&\pgfutilensuremath {0.100}&\pgfutilensuremath {303}&\pgfutilensuremath {0.053}&\pgfutilensuremath {812}&\pgfutilensuremath {0.140}&\pgfutilensuremath {39}&\pgfutilensuremath {0.011}&\pgfutilensuremath {75}&\pgfutilensuremath {0.016}\\%
\pgfutilensuremath {128}&\pgfutilensuremath {689}&\pgfutilensuremath {0.164}&\pgfutilensuremath {402}&\pgfutilensuremath {0.094}&\pgfutilensuremath {1{,}357}&\pgfutilensuremath {0.304}&\pgfutilensuremath {29}&\pgfutilensuremath {0.015}&\pgfutilensuremath {58}&\pgfutilensuremath {0.020}\\%
\pgfutilensuremath {256}&\pgfutilensuremath {925}&\pgfutilensuremath {0.366}&\pgfutilensuremath {531}&\pgfutilensuremath {0.198}&\pgfutilensuremath {2{,}021}&\pgfutilensuremath {0.762}&\pgfutilensuremath {27}&\pgfutilensuremath {0.029}&\pgfutilensuremath {60}&\pgfutilensuremath {0.040}\\%
\pgfutilensuremath {512}&\pgfutilensuremath {1{,}003}&\pgfutilensuremath {0.605}&\pgfutilensuremath {580}&\pgfutilensuremath {0.321}&\pgfutilensuremath {1{,}562}&\pgfutilensuremath {0.888}&\pgfutilensuremath {15}&\pgfutilensuremath {0.036}&\pgfutilensuremath {32}&\pgfutilensuremath {0.058}\\%
\pgfutilensuremath {1{,}024}&\pgfutilensuremath {1{,}187}&\pgfutilensuremath {1.868}&\pgfutilensuremath {758}&\pgfutilensuremath {1.171}&\pgfutilensuremath {2{,}166}&\pgfutilensuremath {3.029}&\pgfutilensuremath {11}&\pgfutilensuremath {0.095}&\pgfutilensuremath {24}&\pgfutilensuremath {0.169}\\%
\pgfutilensuremath {2{,}048}&\pgfutilensuremath {1{,}648}&\pgfutilensuremath {28.816}&\pgfutilensuremath {1{,}082}&\pgfutilensuremath {18.207}&\pgfutilensuremath {2{,}090}&\pgfutilensuremath {34.650}&\pgfutilensuremath {10}&\pgfutilensuremath {0.761}&\pgfutilensuremath {18}&\pgfutilensuremath {0.980}\\%
\pgfutilensuremath {4{,}096}&\pgfutilensuremath {2{,}028}&\pgfutilensuremath {160.026}&\pgfutilensuremath {1{,}319}&\pgfutilensuremath {100.934}&\pgfutilensuremath {2{,}066}&\pgfutilensuremath {166.713}&\pgfutilensuremath {8}&\pgfutilensuremath {3.009}&\pgfutilensuremath {16}&\pgfutilensuremath {4.866}\\%
\end {tabular}%
\endgroup %

\caption{Performance comparison for Example \ref{example: quasiconvex}. Time is measured in seconds.}
\label{table: quasiconvex}
\end{table}

\begin{figure}
    \centering
        \includegraphics[width=0.85\linewidth]{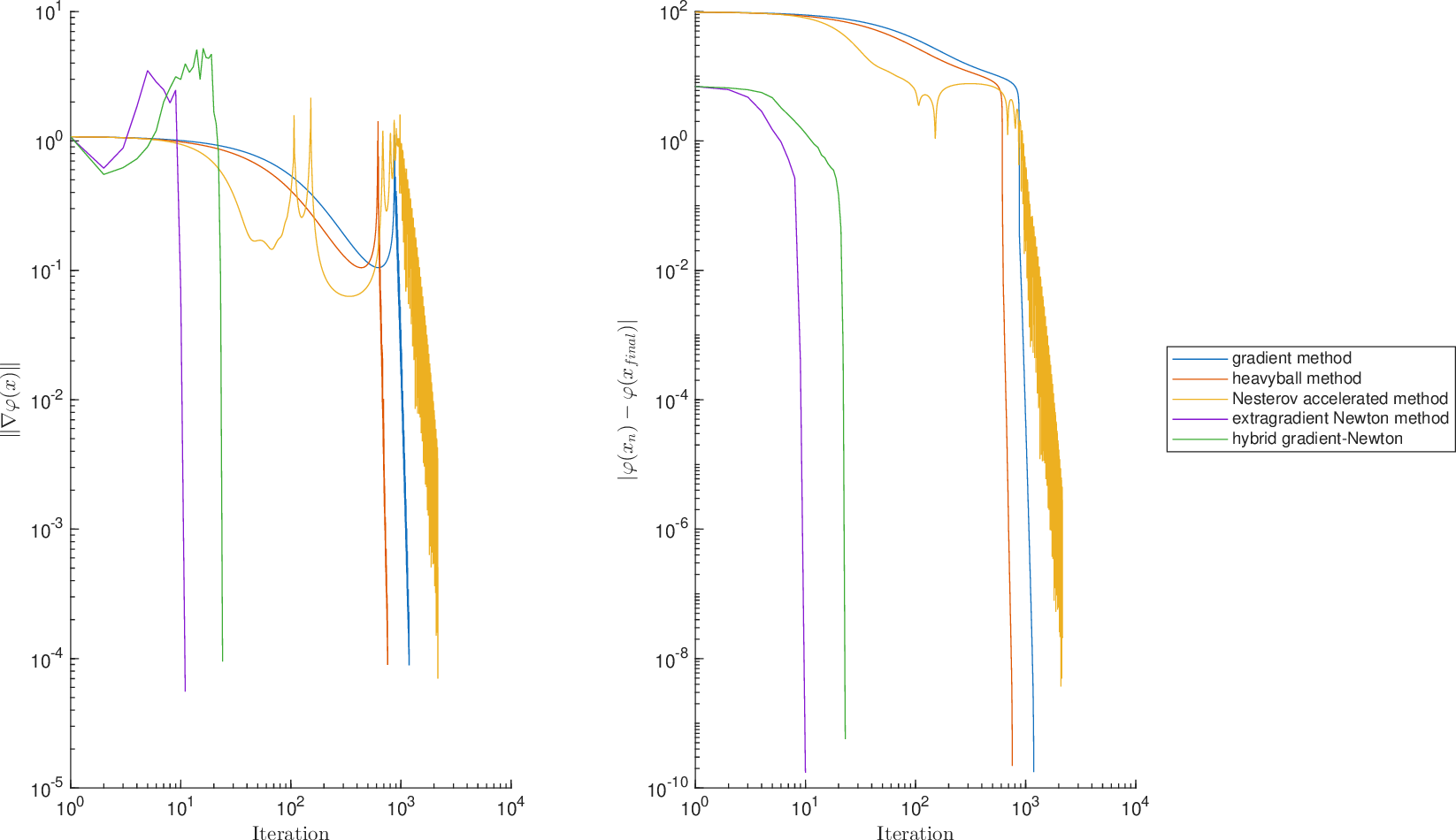}
    \caption{Convergence curves for Example \ref{example: quasiconvex} for $N = 1024$.}
    \label{fig: quasiconvex 1024 convergence}
\end{figure}
\end{Example}

\begin{Example}\rm  \label{example: Broyden Banded}

We now examine the Broyden banded function \cite{Broyden,unconstraints} as implemented in the \cite{s2mpj} package. This function is given for $x \in \R^N$ as 
\begin{equation}\label{eqn: broyden banded function}
\begin{split} 
    f_i(x) &= x_i(2+5x_i^2) + 1 - \sum_{j \in J_i} x_j(1+x_j) \\
    &\text{where } J_i = \{j \; | \; j \neq i, \; \max(1, i-m_l) \leq j \leq \text{min}(n,1+m_u)  \}.
\end{split}
\end{equation}
We specifically consider the parameters $m_l = 5$, $m_u = 1$ and $N=10$.  We solve for a root of this function by solving
\[
\min \quad \varphi(x) = \|f(x)\|_2^2. 
\]
The resulting problem is nonconvex but smooth, with a banded structure. There are multiple roots and we choose an initial value of $(1,\hdots, 1)$ which converges to the root at 
(\numlist[
round-mode=places, 
round-precision=3,
list-final-separator = {, },
]{0.267509316275924;0.438474888764556;0.599720708888636;0.748250087236817;0.882734032075959;0.989082189392820;1.160414943044660;1.308592592296775;1.287850864246086;1.243604886924583}). This is a non-standard initial value, the standard one being $(-1, \hdots, -1)$ which is a "nice" initial value \cite{Broyden}. We wish to test the benefit of the extragradient Newton method which occurs when "poor" initial values are chosen, hence the nonstandard initial value. Table \ref{table: broyden banded} shows that this choice causes poor performance for hybrid gradient-Newton, while extragradient Newton method outperforms it by a factor of 100. The convergence behavior of the algorithms can be seen in Figure \ref{fig: broyden banded}.
\begin{table}[h]
\centering
\begingroup \scriptsize %
\begin {tabular}{c|ccccc}%
\toprule &gradient method&heavyball method&\shortstack {Nesterov\\ accelerated method}&\shortstack {extragradient \\ Newton method}&\shortstack {hybrid\\gradient-Newton}\\\midrule %
iterations&\pgfutilensuremath {1{,}886}&\pgfutilensuremath {2{,}140}&\pgfutilensuremath {476}&\pgfutilensuremath {11}&\pgfutilensuremath {1{,}019}\\%
time (s)&\pgfutilensuremath {24.888}&\pgfutilensuremath {27.748}&\pgfutilensuremath {3.497}&\pgfutilensuremath {0.248}&\pgfutilensuremath {15.626}\\%
\end {tabular}%
\endgroup %

\caption{Performance comparison for Example \ref{example: Broyden Banded}.}
\label{table: broyden banded}
\end{table}

\begin{figure}
    \centering
    \begin{subfigure}{0.4\linewidth}
        \includegraphics[width=\linewidth]{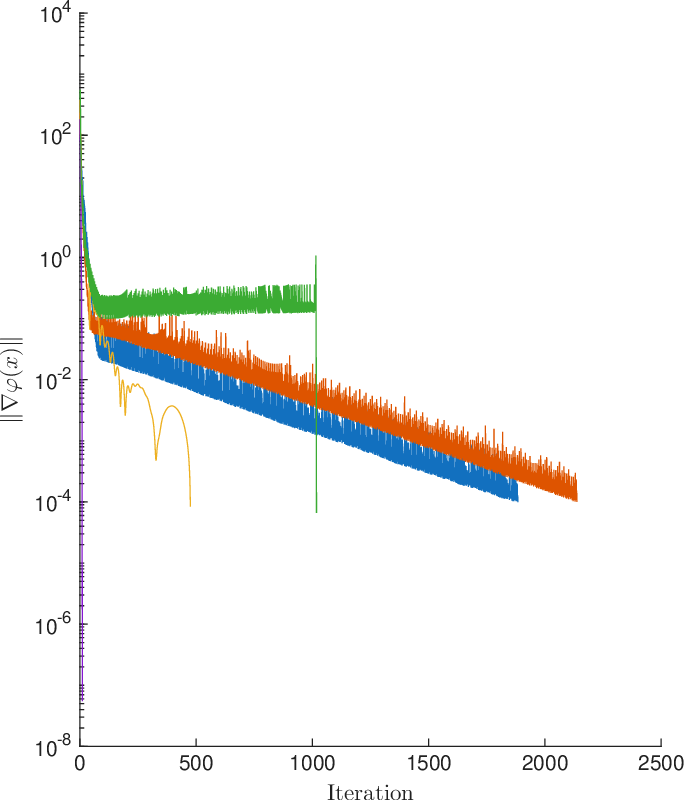}
    \end{subfigure}
    \begin{subfigure}{0.4\linewidth}
        \includegraphics[width=\linewidth]{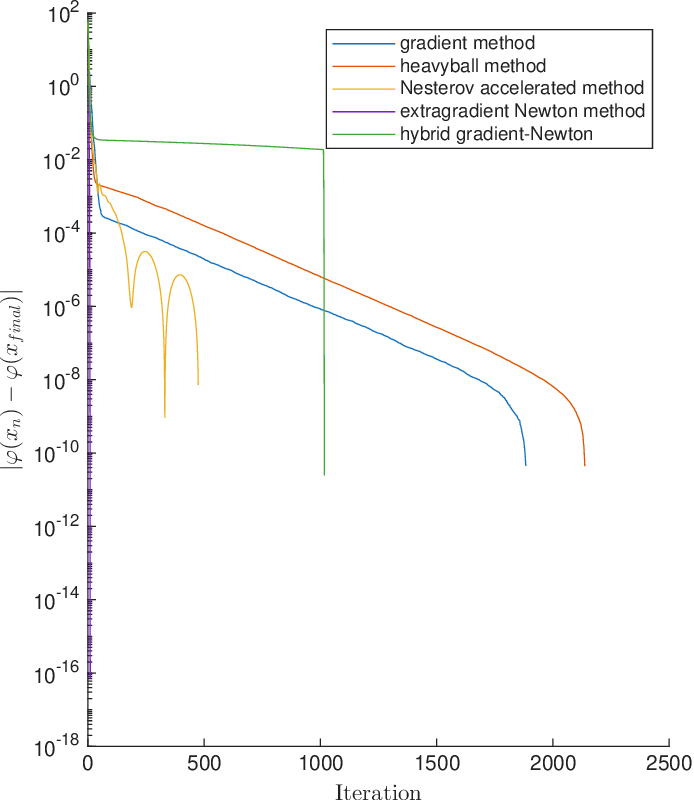}
    \end{subfigure}
    \caption{Convergence curves for Example \ref{example: Broyden Banded}.}
    \label{fig: broyden banded}
\end{figure}

\end{Example}

\begin{Example}\rm \label{example: box2}
    We now evaluate the algorithms on the Box two dimensional function \cite{box,unconstraints} as implemented in the \cite{s2mpj} package. This function is given for $x \in \mathbb{R}^2$ as 
\begin{align} \label{eqn: box3}
    f_i(x) &= e^{-t_i x_1} - e^{-t_ix_2} - \left(e^{-t_i} - e^{-10t_i} \right), \quad t_i = 0.1i, \quad i =1,2. 
\end{align}
As in the previous example we consider the minimization problem 
\[
\min \quad \varphi(x) = \|f(x)\|_2^2. 
\]
The resulting problem is nonconvex, smooth and highly nonlinear and asymmetric \cite{box}. It was originally motivated by estimating parameters from a pair of simultaneous linear differential equations. A plot of the objective contours is given in Figure \ref{fig: box2 contour}, which indicates a highly asymmetric curved valley increasing the problem difficulty. We use the initial value $(0,10)$. In Table \ref{table: box 2}, a summary of the performance is given, and in Figure \ref{fig: box 2} the convergence curves are plotted. We can see that the asymmetric curved valley greatly mislead the hybrid gradient-Newton method, causing it to take far more iterations than any other method, due to the local curvature directing the iterates away from the optimal solution. The extragradient method, does not suffer from this issue and converged quickly to the optimal solution, suggesting that this could be useful in similar problems with asymmetric curved valleys. 

\begin{figure}
    \centering
    \includegraphics[width=0.7\linewidth]{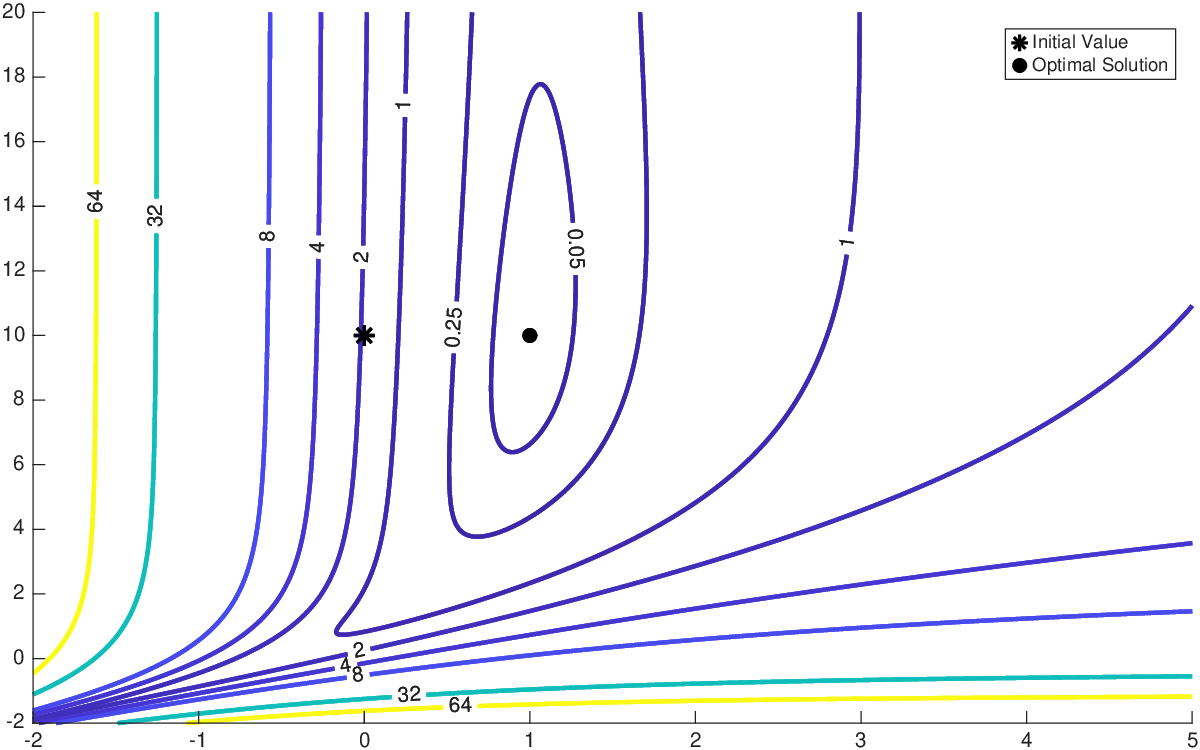}
    \caption{Contours of the exponential objective in Example \ref{example: box2}.}
    \label{fig: box2 contour}
\end{figure}

\begin{table}[h]
\centering
\begingroup \scriptsize %
\begin {tabular}{c|ccccc}%
\toprule &gradient method&heavyball method&\shortstack {Nesterov\\ accelerated method}&\shortstack {extragradient \\ Newton method}&\shortstack {hybrid\\gradient-Newton}\\\midrule %
iterations&\pgfutilensuremath {82}&\pgfutilensuremath {32}&\pgfutilensuremath {72}&\pgfutilensuremath {6}&\pgfutilensuremath {3{,}848}\\%
time (s)&\pgfutilensuremath {0.347}&\pgfutilensuremath {0.145}&\pgfutilensuremath {0.188}&\pgfutilensuremath {0.070}&\pgfutilensuremath {17.402}\\%
\end {tabular}%
\endgroup %

\caption{Performance comparison for Example \ref{example: box2}.}
\label{table: box 2}
\end{table}

\begin{figure}
    \centering
    \begin{subfigure}{0.4\linewidth}
        \includegraphics[width=\linewidth]{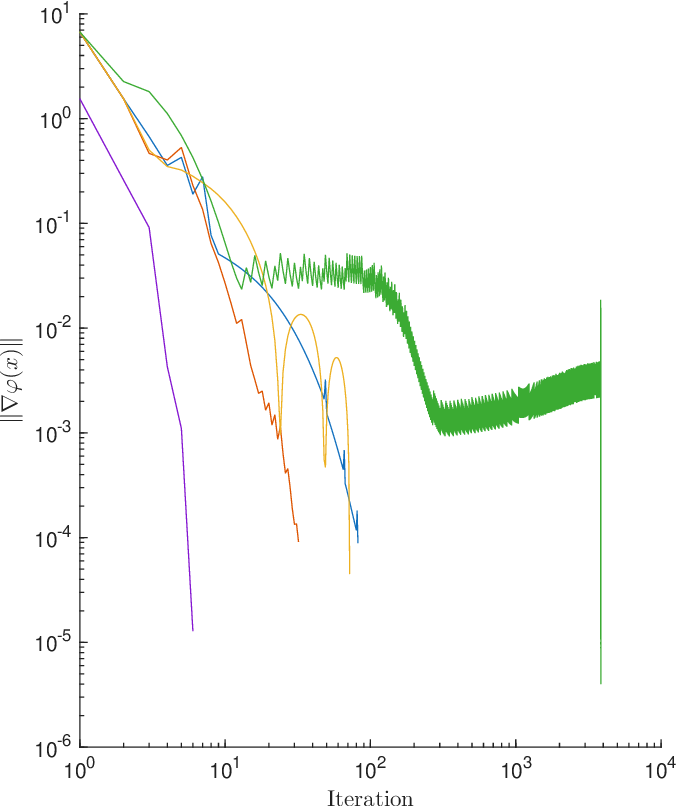}
    \end{subfigure}
    \begin{subfigure}{0.4\linewidth}
        \includegraphics[width=\linewidth]{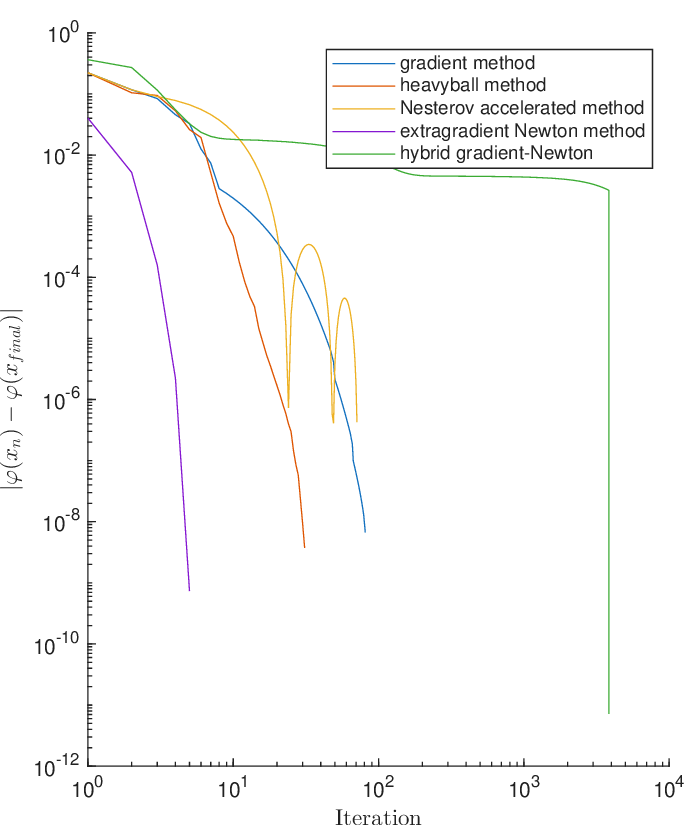}
    \end{subfigure}
    \caption{Convergence curves for Example \ref{example: box2}.}
    \label{fig: box 2}
\end{figure}

\end{Example}

\begin{Example} \rm \label{example: tstudent}

This problem is based on \cite{kmp} and is similar to the setting of \cite{Milzarek}. We define the objective 
\begin{equation} \label{eqn: students-t objective}
    \varphi(x) = \sum_{i=1}^N \log \left( 1+ \frac{(Ax-b)_i^2}{\nu} \right) + \mu \|x\|_2^2, \qquad x \in \R^N
\end{equation}
where $\nu, \mu >0$, the matrix $A$ and vector $b$ will be described shortly. Note that $\varphi$ is smooth but nonconvex. First we generate a sparse signal $\overline{x} \in \R^N$ with $k = [N/40]$ nonzero entries. The $k$ indices $i \in \{1,\hdots,N\}$ are randomly chosen with value 
\[
\overline{x}_i = \eta_1(i) 10^{\eta_2(i)},
\]
where $\eta_1(i) \in \{-1,1\}$ is a random sign and $\eta_2(i)$ is uniformly distributed in $[0,2].$ We define a matrix $A \in \R^{m \times N}$ that makes $m = N/8$ random cosine measurements, that is $Ax = (\text{dct}(x))_J$, where $J \subset \{ 1, \hdots, n \}$, $|J| =m$, is initialized randomly and dct is the discrete cosine transform.
Finally the input data $b \in \R^m$ is obtained by adding Student's-t noise with degree of freedom 4 rescaled by 0.1 to $A\overline{x}$. 

The gradient and Hessian of \eqref{eqn: students-t objective} are computed in \cite{kmp} and presented here for completeness. 
\begin{equation}
    \nabla \varphi (x)  = 2A^*u +2\mu x, \quad \text{where } u_i = \frac{(Ax-b)_i}{\nu + (Ax-b)^2_i }, 
\end{equation}
\begin{equation}
    \nabla^2 \varphi(x) = 2A^*\text{diag}(v) A + \mu I, \quad \text{where } v_i = \frac{\nu -(Ax-b)^2_i}{\left(\nu + (Ax-b)_i^2\right)^2}.
\end{equation}
As in \cite{Milzarek} the initial value is chosen as $A^*b$.

 We solve the problem for various sizes $N$ with results collected in Table \ref{table: tstudent}.  The extragradient Newton method is generally superior in terms of iterations compared to other tested methods, and specifically generally outperforms hybrid gradient-Newton. At worst, it is equivalent in iterations to hybrid gradient-Newton, which only occurs for the small scale of problems $N\leq32$. For this small scale hybrid gradient-Newton outperforms in runtime, as the cost of performing an extra gradient descent step is non-negligible at these scales. However, once the problems sizes scale up and the cost of computing a gradient becomes negligible compared to the Hessian, we see that extragradient Newton method outperforms hybrid gradient-Newton in both run-time and iterations. In particular for $N=256$, hybrid gradient-Newton performs unusually poorly, even more clearly seen in the convergence curves in Figure \ref{fig: tstudent 256 convergence}.
 
 In the middle regime $64 \leq N \leq 256$, where the costs to form the Hessian have yet to become expensive, our method outperforms all others significantly, which demonstrates the practicality of the algorithm on this problem, albeit on limited scales. 
\begin{table}[h]
\centering
\begingroup \scriptsize %
\begin {tabular}{c|cccccccccc}%
\toprule & \multicolumn {2}{c}{gradient method} & \multicolumn {2}{c}{heavyball method} &\multicolumn {2}{c}{\shortstack {Nesterov \\ accelerated method}} & \multicolumn {2}{c}{ \shortstack {extragradient\\ Newton method}} & \multicolumn {2}{c}{\shortstack {hybrid \\ gradient-Newton}}\\N&iterations&time&iterations&time&iterations&time&iterations&time&iterations&time\\\midrule %
\pgfutilensuremath {8}&\pgfutilensuremath {5}&\pgfutilensuremath {0.019}&\pgfutilensuremath {12}&\pgfutilensuremath {0.009}&\pgfutilensuremath {13}&\pgfutilensuremath {0.009}&\pgfutilensuremath {3}&\pgfutilensuremath {0.013}&\pgfutilensuremath {3}&\pgfutilensuremath {0.004}\\%
\pgfutilensuremath {16}&\pgfutilensuremath {6}&\pgfutilensuremath {0.006}&\pgfutilensuremath {15}&\pgfutilensuremath {0.005}&\pgfutilensuremath {8}&\pgfutilensuremath {0.003}&\pgfutilensuremath {2}&\pgfutilensuremath {0.003}&\pgfutilensuremath {4}&\pgfutilensuremath {0.001}\\%
\pgfutilensuremath {32}&\pgfutilensuremath {12}&\pgfutilensuremath {0.003}&\pgfutilensuremath {11}&\pgfutilensuremath {0.002}&\pgfutilensuremath {12}&\pgfutilensuremath {0.002}&\pgfutilensuremath {3}&\pgfutilensuremath {0.001}&\pgfutilensuremath {3}&\pgfutilensuremath {0.001}\\%
\pgfutilensuremath {64}&\pgfutilensuremath {116}&\pgfutilensuremath {0.022}&\pgfutilensuremath {115}&\pgfutilensuremath {0.022}&\pgfutilensuremath {292}&\pgfutilensuremath {0.056}&\pgfutilensuremath {11}&\pgfutilensuremath {0.003}&\pgfutilensuremath {19}&\pgfutilensuremath {0.004}\\%
\pgfutilensuremath {128}&\pgfutilensuremath {151}&\pgfutilensuremath {0.028}&\pgfutilensuremath {97}&\pgfutilensuremath {0.018}&\pgfutilensuremath {322}&\pgfutilensuremath {0.059}&\pgfutilensuremath {20}&\pgfutilensuremath {0.008}&\pgfutilensuremath {51}&\pgfutilensuremath {0.019}\\%
\pgfutilensuremath {256}&\pgfutilensuremath {370}&\pgfutilensuremath {0.070}&\pgfutilensuremath {305}&\pgfutilensuremath {0.058}&\pgfutilensuremath {422}&\pgfutilensuremath {0.076}&\pgfutilensuremath {36}&\pgfutilensuremath {0.020}&\pgfutilensuremath {385}&\pgfutilensuremath {0.168}\\%
\pgfutilensuremath {512}&\pgfutilensuremath {184}&\pgfutilensuremath {0.060}&\pgfutilensuremath {129}&\pgfutilensuremath {0.049}&\pgfutilensuremath {386}&\pgfutilensuremath {0.116}&\pgfutilensuremath {34}&\pgfutilensuremath {0.089}&\pgfutilensuremath {79}&\pgfutilensuremath {0.272}\\%
\pgfutilensuremath {1{,}024}&\pgfutilensuremath {259}&\pgfutilensuremath {0.109}&\pgfutilensuremath {157}&\pgfutilensuremath {0.064}&\pgfutilensuremath {883}&\pgfutilensuremath {0.493}&\pgfutilensuremath {76}&\pgfutilensuremath {0.362}&\pgfutilensuremath {157}&\pgfutilensuremath {0.692}\\%
\pgfutilensuremath {2{,}048}&\pgfutilensuremath {385}&\pgfutilensuremath {0.376}&\pgfutilensuremath {267}&\pgfutilensuremath {1.012}&\pgfutilensuremath {464}&\pgfutilensuremath {0.467}&\pgfutilensuremath {67}&\pgfutilensuremath {1.380}&\pgfutilensuremath {150}&\pgfutilensuremath {2.854}\\%
\end {tabular}%
\endgroup %

\caption{Performance comparison for Example \ref{example: tstudent}. Time is measured in seconds.}
\label{table: tstudent}
\end{table}

\begin{figure}
    \centering
    \begin{subfigure}{0.4\linewidth}
        \includegraphics[width=\linewidth]{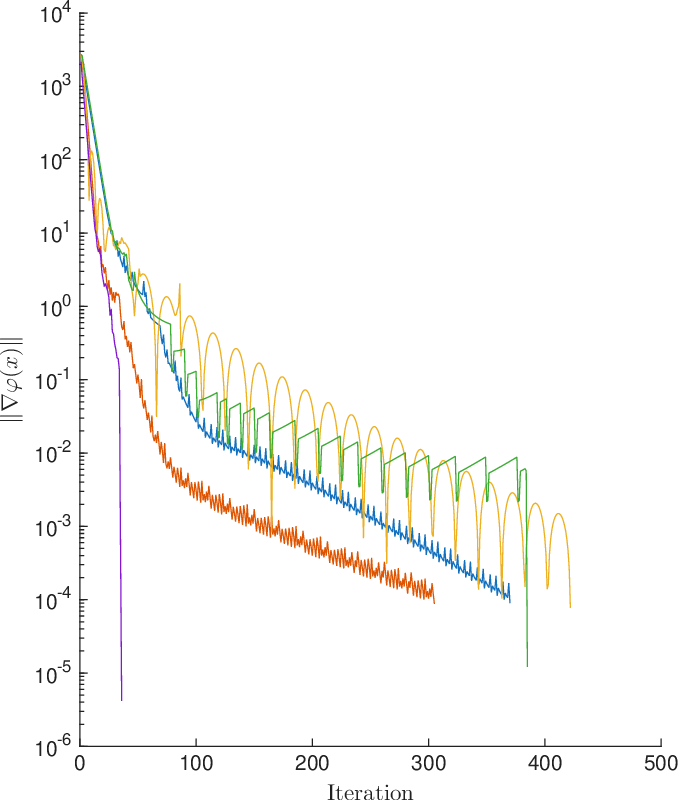} 
    \end{subfigure}
    \begin{subfigure}{0.4\linewidth}
        \includegraphics[width=\linewidth]{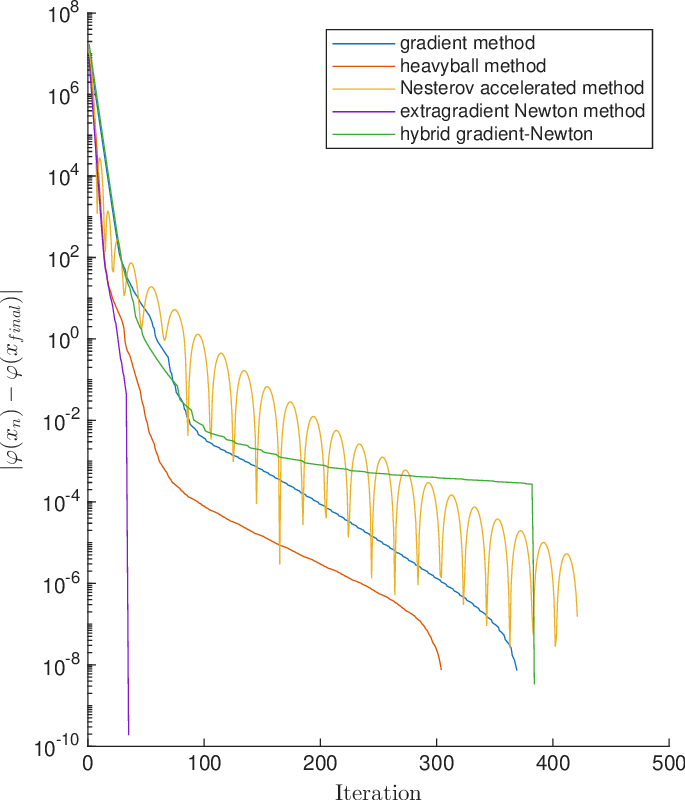}
    \end{subfigure}
    \caption{Convergence curves for Example \ref{example: tstudent} for $N = 256$.}
    \label{fig: tstudent 256 convergence}
\end{figure}
\end{Example}

\section{Concluding Remarks and Further Research}\label{sec:conclusion}

In this paper, we proposed a general line-search framework for Newton-type methods for unconstrained optimization. Unlike existing regularized Newton methods, the proposed framework exploits the Newton direction only when it is well defined and suitable, thereby avoiding Hessian regularization at every iteration. We established global convergence under mild assumptions, including the Polyak--\L ojasiewicz--Kurdyka (PLK) condition, together with local superlinear and quadratic convergence under appropriate regularity assumptions. We further demonstrated the applicability of the proposed framework to strongly quasiconvex optimization, providing a Newton-type algorithm with rigorous convergence guarantees for this important class of problems. The proposed framework naturally lead to a new extragradient Newton method that was shown effective  via numerical experiments.

\medskip 
Several directions for future research appear promising. A natural extension is to develop proximal Newton-type methods for structured nonsmooth and nonconvex optimization problems, where the objective function consists of smooth and nonsmooth components. Another important direction is to investigate stochastic variants of the proposed framework for large-scale optimization problems arising in machine learning and data science. It would also be of interest to extend the proposed methodology to constrained optimization and to explore adaptive strategies that further reduce the computational cost while preserving fast local convergence.


\section*{Funding and Conflicts of Interests} 

This research was partly supported by the Early Career Scholars
Program 2026, University of North Dakota under project 43700-2375-UND0031286. The authors declare that the presented results are new, and there is no any conflict of interest.
\end{document}